\setlist[enumerate]{leftmargin=.5in}
\setlist[itemize]{leftmargin=.5in}
\crefname{hypothesis}{Hypothesis}{Hypotheses}
\title{Sliding at first order: Higher-order momentum distributions for discontinuous image registration\thanks{Submitted to the editors DATE.
\funding{The work of the first and third authors was supported by the National Natural Science Foundation of China (No. 11971296) and the National Key R\&D Program of China (No. 2021YFA1003004). The work of the fourth author was supported by the Villum Foundation grant 40582, the Novo Nordisk Foundation grant NNF18OC0052000, and the UCPH Data+ strategy funds.}}}
\author{
Lili Bao\footnotemark[2]~\footnotemark[3]\thanks{Department of Mathematics, Shanghai University, Shanghai 200444, P. R. China.}
\and Jiahao Lu\footnotemark[3]\thanks{Department of Computer Science, University of Copenhagen, Universitetsparken 1, Copenhagen 2100, Denmark.}
\and Shihui Ying\footnotemark[2]~\thanks{Corresponding author. \email{shying@shu.edu.cn},~ \email{sommer@di.ku.dk}.}
\and Stefan Sommer\footnotemark[3]~\footnotemark[4]
}
\begin{document}

\maketitle

\begin{abstract}
In this paper, we propose a new approach to deformable image registration that captures sliding motions. The large deformation diffeomorphic metric mapping (LDDMM) registration method faces challenges in representing sliding motion since it per construction generates smooth warps. To address this issue, we extend LDDMM by incorporating both zeroth- and first-order momenta with a non-differentiable kernel. This allows to represent both discontinuous deformation at switching boundaries and diffeomorphic deformation in homogeneous regions. We provide a mathematical analysis of the proposed deformation model from the viewpoint of discontinuous systems. To evaluate our approach, we conduct experiments on both artificial images and the publicly available DIR-Lab 4DCT dataset. Results show the effectiveness of our approach in capturing plausible sliding motion.
\end{abstract}

\begin{keywords}
large deformation diffeomorphic metric mapping, registration, momentum, kernel, discontinuous deformation
\end{keywords}

\begin{MSCcodes}
65D18, 65K10, 34A36, 68U10
\end{MSCcodes}

\section{Introduction}
\label{sec: Introduction}
Image registration is a widely used technique in computer vision and medical image processing with the goal of finding reasonable spatial deformations between two or more images \cite{sotiras_deformable_2013,oliveira_medical_2014}. However, it can be a challenging task due to the highly ill-posed nature of the problem, particularly when addressing sliding motion which results in discontinuous deformation.  

Current sliding motion registration methods often employ displacement-based techniques 
and special constraints are designed for the displacement field. Such approaches can be limited when applied to e.g. lung registration during inspiration and expiration, where large sliding motion occurs and invertibility of the deformations is crucial. The Large Deformation Diffeomorphic Metric Mapping (LDDMM) framework in focus here can handle large deformations and ensures invertibility. The velocity fields are smooth and point-supported momentum models local translations (illustrated in \cref{fig: Gaussian kernel zeroth simulation}). 
The first-order momentum introduced in \cite{sommer_higher-order_2013} allows for modelling local linear deformations (see \cref{fig: Gaussian kernel first simulation}) with smooth kernels. In contrast, non-differentiable kernels, as shown in \cite{jud_sparse_2016}, can deal with discontinuous deformation.
As seen in \cref{fig: Multiplicative Wendland kernel first simulation}, encoding the deformation with the non-differentiable Wendland kernel with first-order allows for simulating sliding motion.

Based on these observations, we propose a sliding motion registration method by incorporating a compactly supported non-differentiable kernel and its corresponding first-order momentum based on the LDDMM framework.
We show how this approach allows discontinuous deformation at switching boundaries while maintaining diffeomorphic deformation in homogeneous regions. We also analyze and illustrate how the trajectory changes when it hits the switching boundaries, resulting in discontinuous deformation.

\begin{figure}[htbp]
\centering
\subfigure[]{
\includegraphics[width=0.3\textwidth]{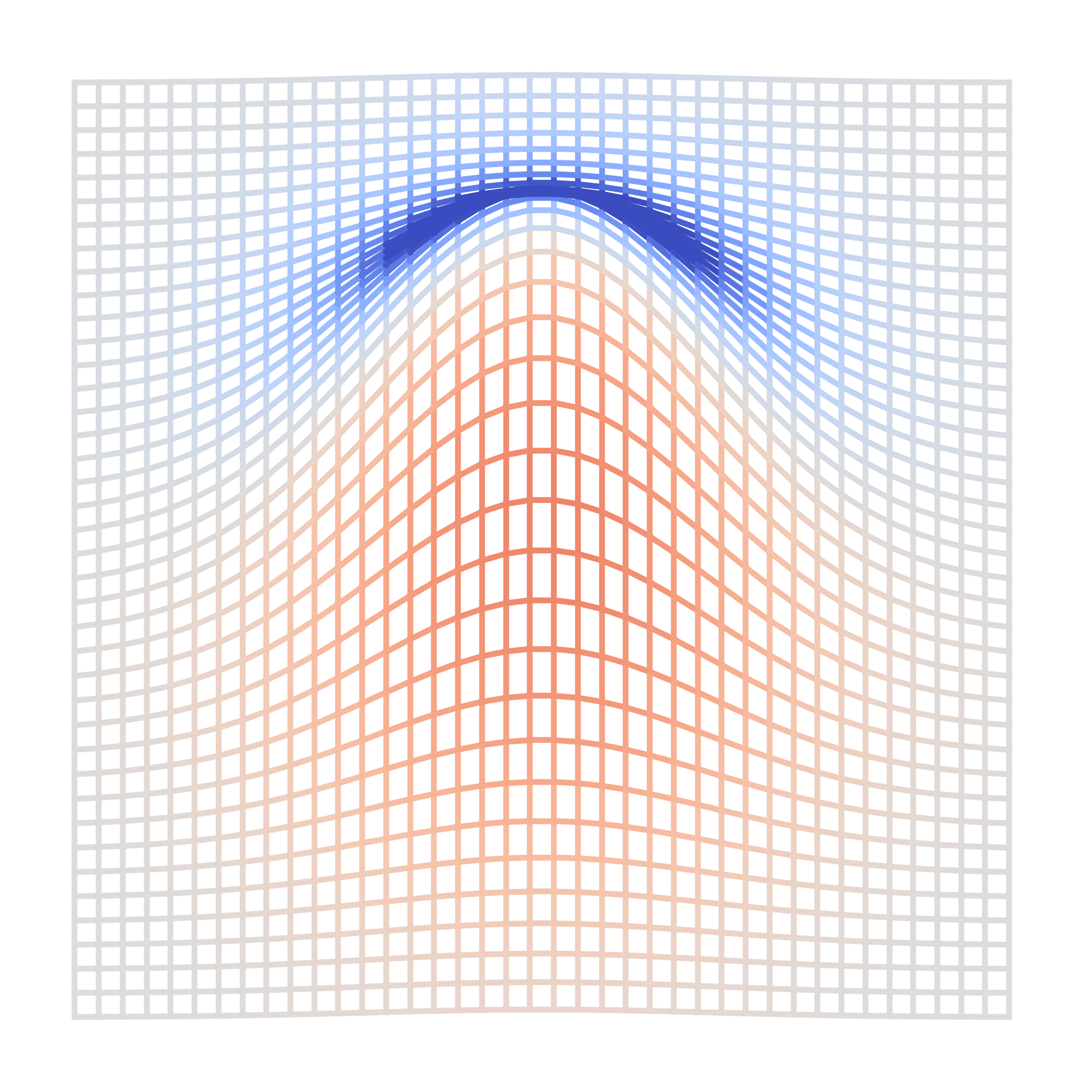}
\label{fig: Gaussian kernel zeroth simulation}}
\centering
\subfigure[]{
\includegraphics[width=0.3\textwidth]{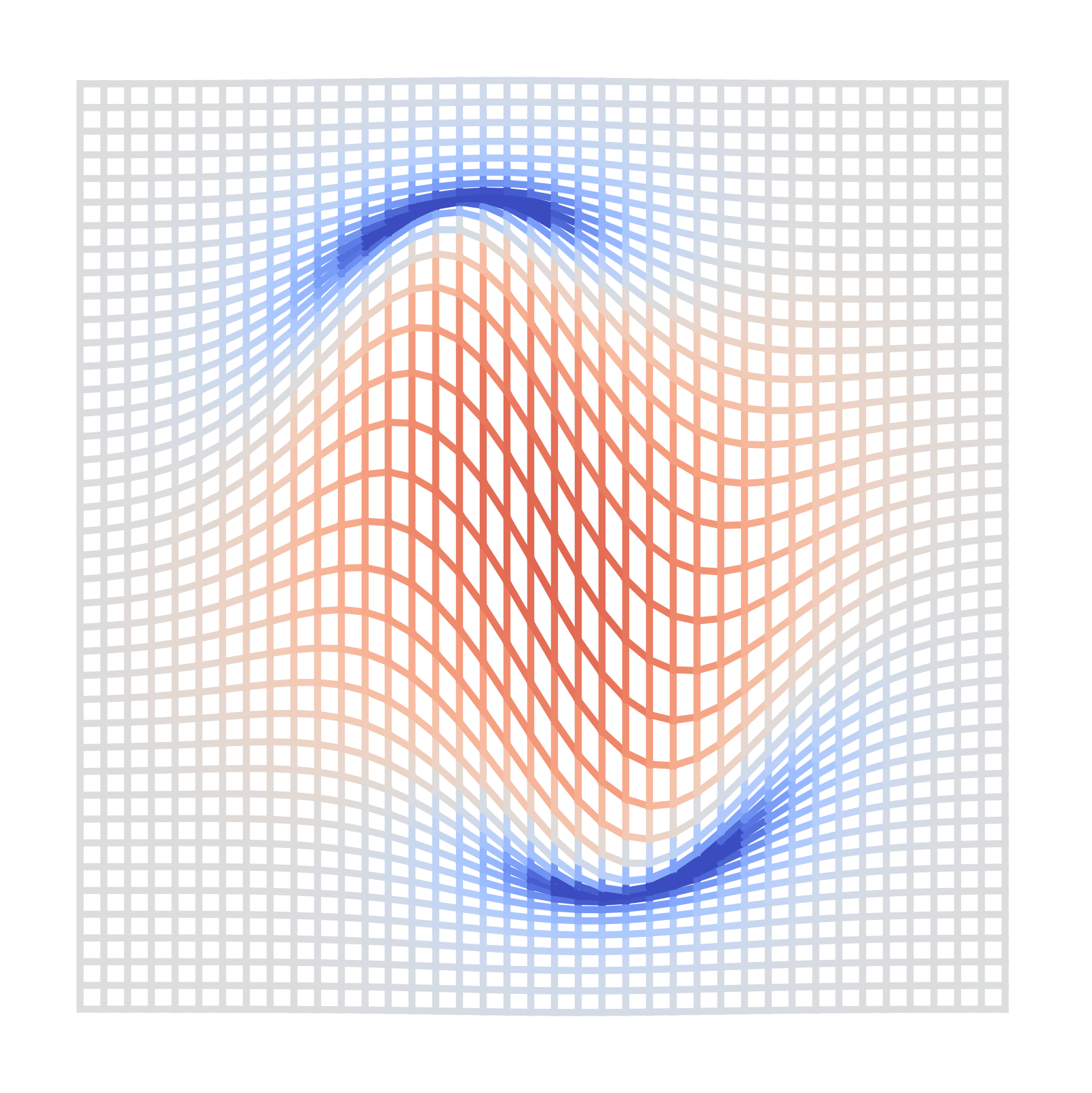}
\label{fig: Gaussian kernel first simulation}}
\centering
\subfigure[]{
\includegraphics[width=0.3\textwidth]{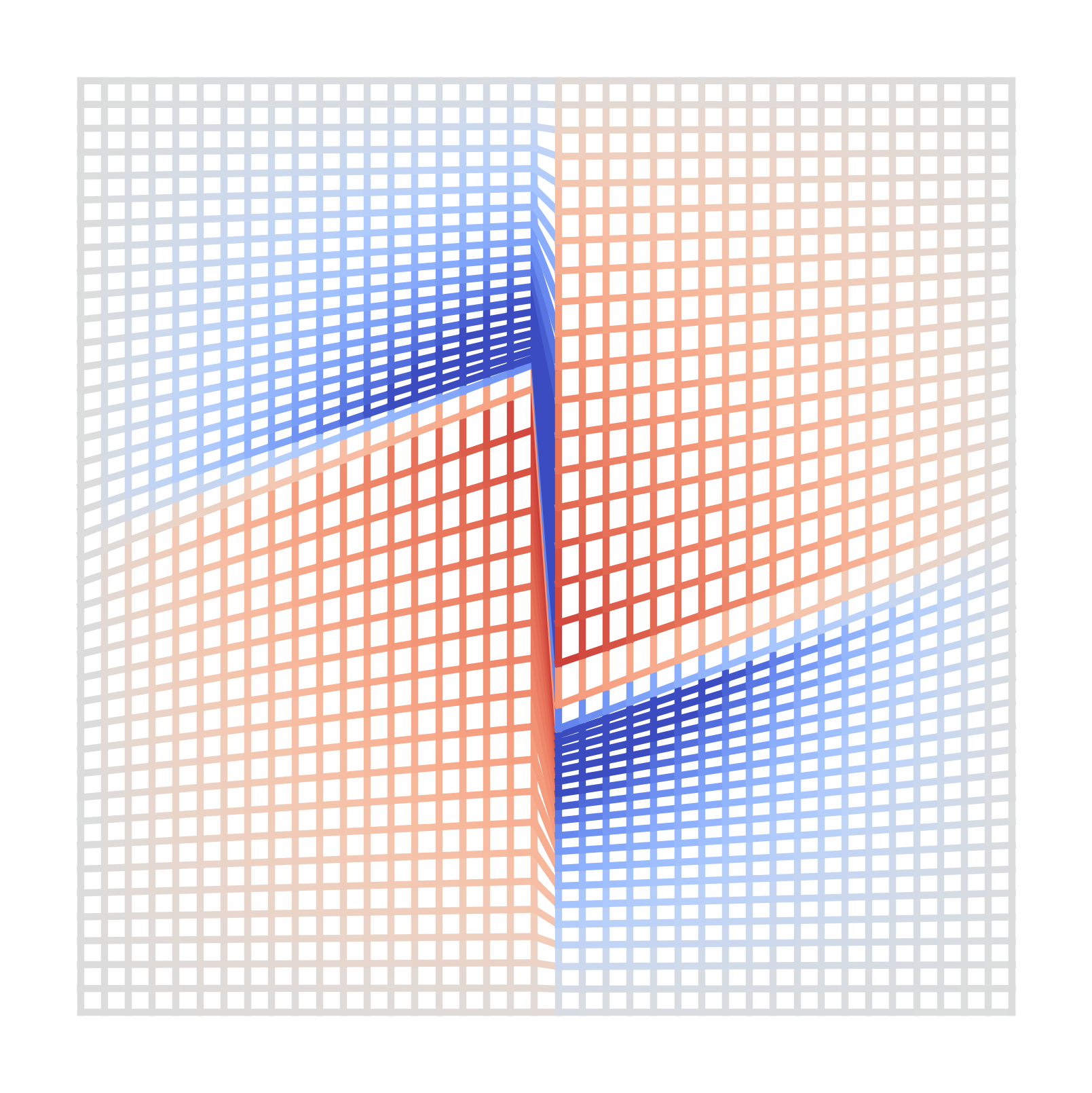}
\label{fig: Multiplicative Wendland kernel first simulation}}
\caption{The deformation encoded by smooth and non-smooth kernels with different orders. \subref{fig: Gaussian kernel zeroth simulation} Gaussian kernel with zeroth order encodes local translation; 
\subref{fig: Gaussian kernel first simulation} Gaussian kernel with first order encodes local smooth sheering; \subref{fig: Multiplicative Wendland kernel first simulation} Non-differentiable Wendland kernel with first order encodes local non-smooth sliding motion.}
\label{fig: simulate}
\end{figure}

\subsection{Background and related work}
\label{subsec: Related work}
The incorporation of sliding motion in image registration is particularly important in medical image analysis, particularly in the context of lung registration. Aligning images of the same patient acquired at different times or with different devices is crucial for disease diagnosis and monitoring progression. Additionally, accurately estimating lung breathing motion can improve the precision of radiotherapy treatment \cite{schmidt-richberg_registration_2014}. 
It would benefit from incorporating sliding motion when doing lung registration, because during breathing, the lungs move against the chest wall across the pleura, resulting in natural sliding motion (as seen in \cref{fig: sliding illustration}). Thus sliding motion registration is a crucial aspect of medical image processing. 

 \begin{figure}[htbp]
\centering
\includegraphics[width=0.7\textwidth]{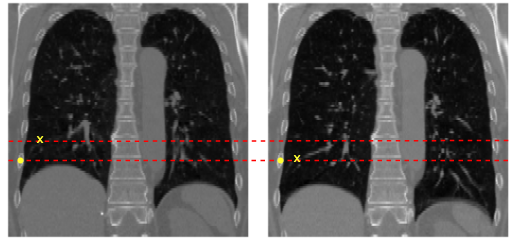}
\caption{\label{fig: sliding illustration}Illustration of the sliding motion for CT lung images from the end exhalation (left) to the end inhalation (right) with two landmarks. The motion of the two landmarks clearly illustrates the sliding motion at the lung boundary. }
\end{figure}

Over the years, numerous methods have been proposed to address the challenge of sliding motion registration. These methods can broadly be divided into two categories: parametric models and non-parametric methods.

Parametric image registration techniques involve finitely parameterizing the deformation field using a small number of parameters, such as control points or coefficients. One commonly used parametric technique is the Free-Form Deformation (FFD) method, which models the deformation field as a combination of basis functions and a set of control points, such as B-Splines \cite{rueckert_nonrigid_1999} and thin-plate splines \cite{bookstein_principal_1989}. When addressing sliding motion, special constraints should be added as the deformation is no longer smooth. For example, in \cite{yoshida_total_2014, vishnevskiy_isotropic_2017} the authors imposed anisotropic and isotropic total variation regularizations respectively to handle non-smooth motion. In \cite{gong_locally_2020}, the author presents a locally adaptive total p-variation to estimate the sliding motion. 
These parametric techniques \cite{rueckert_nonrigid_1999, hua_multiresolution_2017, vishnevskiy_isotropic_2017} are computationally efficient due to their limited degrees of freedom. Their limitations include a trade-off between increasing the number of control points for improved accuracy and increased computational cost, and difficulty in representing complex deformation.

An alternative approach to parametric techniques is non-parametric techniques, which represent the deformation field as a continuous function instead of a finite number of parameters. A popular approach is the variational-based methods, where the deformation field is obtained by minimizing an energy functional that encodes the similarity between images, and a regularization term that enforces certain properties of the deformation field to exclude suboptimal solutions \cite{modersitzki_numerical_2003}. Various regularizations, such as diffusion, elastic \cite{modersitzki_numerical_2003}, and curvature \cite{fischer_curvature_2003} have been proposed to obtain smooth deformation. When dealing with non-smooth sliding motion, special regularizers have been proposed. For example, total variation has been used to describe the discontinuity of the deformation field in \cite{frohn-schauf_multigrid_2008}, while locally adaptive bilateral filters have been used to regularize the estimated deformation fields in \cite{papiez_implicit_2014}.
Direction-dependent regularization methods have been employed in \cite{fu_adaptive_2018,schmidt-richberg_estimation_2012},
where the deformation field is decomposed into normal and tangential directions and regularized separately. 
While these methods that encode the deformation in displacement fields can provide plausible deformation fields for small deformations, they struggle to handle large deformations and cannot ensure the invertibility of the deformations, leading to artifacts such as tearing and folding.

To overcome these limitations and ensure the invertibility of the deformations, an alternative approach to non-parametric image registration is to model the deformation field as a flow generated by a smooth velocity field, which can effectively handle large deformations while maintaining diffeomorphism \cite{younes_shapes_2019}. This approach, known as the Large Deformation Diffeomorphic Metric Mapping (LDDMM) framework \cite{beg_computing_2005}, has strong theoretical foundations in Lie group theory and evolution equations in physical modeling, providing a solid foundation for accurate registration results \cite{szekely_multi-scale_2011}.
In order to ensure the invertibility of the deformation when dealing with sliding motion, piecewise-diffeomorphic models have been proposed in \cite{risser_piecewise-diffeomorphic_2013,risser_dieomorphic_2011}, which allow non-smooth deformations at the boundary while preserving invertibility throughout the whole domain, providing a more practical solution while still lacking some theoretical analysis.

Here, we propose an extension of the LDDMM registration framework by incorporating non-differentiable kernel and its first-order momentum.
This method allows for effectively handling sliding motion while preserving the invertibility of the deformation, and we show that much of the structure leading to optimal paths in LDDMM are preserved.

\subsection{Content and Outline}
\label{subsec: Content and Outline}
We start by describing the standard LDDMM registration framework and the mathematical foundation of the method. We then give a detailed explanation and analysis of the model with higher order momenta and non-differentiable kernel. Next follows the evaluation of synthetic images and the publicly available DIR-Lab 4DCT dataset.
The main contributions of this paper are:
\begin{itemize}
    \item Extending zeroth-order momentum in the standard LDDMM to zeroth- and first-order momenta with non-differentiable kernels to capture a more comprehensive representation of the local deformation.
    \item Using a non-differentiable multiplicative Wendland kernel for discontinuous deformation.
    \item Using a sparsity prior to force compact representation of the deformation across orders.
    \item Providing a mathematical analysis of the proposed discontinuous deformation from the viewpoint of discontinuous systems.
\end{itemize}

\section{Standard LDDMM Registration}
\label{sec: Standard LDDMM Registration}
In this section, we outline the LDDMM registration framework.
Given a template image $I_0$ and a reference image $I_1$ defined on the image domain $\Omega\subset R^{d}~(d\in\{2,3\})$,
image registration seeks a reasonable deformation field $\varphi:\Omega\rightarrow\Omega$, such that the reference image $I_1$ and the warped template image $\varphi \cdot I_0 = I_0\circ\varphi^{-1}$ are as similar as possible. Therefore, the image registration between the template image $I_0$ and the reference image $I_1$ can be formulated as the following minimizing problem:
\begin{equation}
   \varphi^\ast =\mathop{\arg\min}_{\varphi} E(\varphi):=  E_S(\varphi \cdot I_0, I_1)+E_R(\varphi),
\label{eq: functional} 
\end{equation}
where the first term $E_S$ is a similarity term that measures the similarity between the two images, and $E_R$ is a regularization term that penalizes undesirable or implausible solutions.

The selection of the similarity measure $E_S$ varies depending on the images being registered. Some commonly used measures include the sum of squared differences (SSD)\cite{hill_medical_2001}, mutual information (MI)\cite{maes_multimodality_1997}, normalized cross-correlation (NCC)\cite{wells_correlation_1998}, and normalized gradient fields (NGF)\cite{modersitzki_fair_2009}. Each measure has its own advantages and disadvantages, thus the choice of similarity measure should be based on the specific application and the desired level of accuracy.  

Now we turn to the regularization term $E_R$ based on fluid approaches, where the deformation is modeled as a flow generated by a smooth, time-dependent vector field $v =(v(t), t\in [0,1])$ through a differential equation:
\begin{equation}
    \frac{\partial \varphi(t, x)}{\partial t} = v(t, \varphi(t, x)),\ \varphi(0, x) = x, \forall x \in \Omega.
    \label{eq: differential equation}
\end{equation}
This equation generates a path of diffeomorphism $t \mapsto \varphi_{0t}^v$ starting at the beginning point the identity $\varphi_{00}^v =$ Id$_{\Omega}$ and terminating at the endpoint $\varphi_{01}^v = \varphi$ matching the given images.
Let $V$ denote the admissible Hilbert space of vector fields $v: \Omega \rightarrow R^{d}$, included in $L^2(\Omega, R^d)$ and associated with norm $\Vert \cdot \Vert_V$, let $\mathcal{X}_{V}^{2}$ denote the set of time-dependent vector field
such that for each $t$, $v(t)\in V$ and 
\begin{displaymath}
\Vert v \Vert_{\mathcal{X}_{V}^{2}}^2 =\int_0^1 \Vert v(t) \Vert_V^2 dt < \infty.
\end{displaymath}
The set of the flow at time $1$ builds a group of diffeomorphisms with respect to the composition of functions denoted by $G_V = \{\varphi_{01}^v, v \in \mathcal{X}_{V}^{2} \}$. Moreover, $G_V$ can be given structure as an infinite-dimensional manifold and hence it is also a Lie group. $T_\varphi G_V$ denotes the tangent space at the point $\varphi$. 

The length of the curve connecting the initial point $\varphi_{00}^v$ to the final point $\varphi_{01}^v$ on the group of diffeomorphisms $G_V$ is expressed as
\begin{equation}
    L(\varphi) = \int_0^1 \langle \dot{\varphi}(t), \dot{\varphi}(t) \rangle_{T_{\varphi}G_V}^{\frac{1}{2}} dt
\end{equation}
with regard to an inner product $\langle \cdot , \cdot \rangle_{T_{\varphi}G_V}$ on each tangent space, where $\dot{\varphi}$ denotes differentiation with respect to time. The inner product on tangent space $T_\varphi G_V$ associated with the norm $\Vert \cdot \Vert_{T_\varphi G_V}$ makes $G_V$ a Riemannian manifold. The corresponding energy is
\begin{equation}
    E_R(\varphi) = \frac{1}{2}\int_0^1 \langle \dot{\varphi}(t), \dot{\varphi}(t) \rangle_{T_\varphi G_V} dt = \frac{1}{2}\int_0^1 \Vert \dot{\varphi}(t) \Vert_{T_\varphi G_V}^{2} dt.
\end{equation}
Choosing the metric to be right-invariant metric on $G_V$ so that $\Vert \dot{\varphi}(t) \Vert_{T_\varphi G_V} = \Vert v(t) \Vert_{T_{\text{Id}}G_V}$, we can write $E_R$ as
\begin{equation}
    E_R(\varphi) = \frac{1}{2}\int_0 ^ 1 \|v(t)\|_{T_{\text{Id}}G_V}^2 dt,
    \label{eq: regularizer}
\end{equation}
where $v(t)$ is defined in the differential equation \cref{eq: differential equation}.
In this framework, the admissible Hilbert space $V$ mentioned previously is equal to the tangent space of $G_V$ at the identity $\text{Id}$, with $V = T_{\text{Id}}G_V$ and $\Vert \cdot \Vert_{V} = \Vert \cdot \Vert_{T_{\text{Id}}G_V}$. Minimizing the functional in \cref{eq: functional} with the regularizer in \cref{eq: regularizer} yields a geodesic with the shortest length path in $G_V$. 
In the next step, we will describe how the admissible Hilbert space $V$ can be generated. 

\subsection{Reproducing Kernel and Momentum}
\label{subsec: Reproducing Kernel and Momentum}
One way to construct the space $V$ is to use an inner product defined through a differential operator $L$ given by 
\begin{equation}
    \langle u , v \rangle_V := \langle Lu , v \rangle _{L^2} = \langle u , Lv \rangle_{L^2},
\label{eq: differential operator}
\end{equation}
where $\langle \cdot , \cdot \rangle _{L^2}$ is the usual $L^2$-product for square integrable vector fields on $\Omega$.
The induced norm is 
\begin{equation*}
    \Vert v\Vert_V := \sqrt{\langle v , v \rangle_V}.
\end{equation*}
In fact, the operator $L$ is a duality operator between $V$ and its dual space $V^{\ast}$, $L: V\rightarrow V^{\ast}$ is also referred to as the momentum operator.
This is due to the fact that the momentum operator connects the inner product on $V$ to the inner product in $L^2$ as can be seen in \cref{eq: differential operator}, and the image $Lv$ of an element $v\in V$ is referred to as the momentum of $v$, i.e. $m = Lv$.

Each point $x\in \Omega$ specifies a linear evaluation functional Diracs $\delta_x$ defined by $(\delta_x|v) = v(x)$ for $v \in V$, this means that $\delta_x \in V^{\ast}$.
This implies that for any $a\in R^d$, the function $a\otimes \delta_x: v\mapsto a^T v(x)$ is also a continuous linear functional on $V$, so it belongs to $V^{\ast}$. According to the Riesz representation theorem, for every $v \in V$, there exists a reproducing kernel $K: \Omega \times \Omega \rightarrow R^{d \times d}$ such that
\begin{equation}
   (a\otimes \delta_x |v )=  \langle K (., x)a, v \rangle_V = a^T v(x).
\end{equation}
This implies that
\begin{equation}
   \langle LK (., x)a, v \rangle_{L^2} = \langle K (., x)a, v \rangle_V =  (a\otimes \delta_x |v ) =  a^T v(x) = \langle a\otimes \delta_x, v \rangle_{L^2}.
\end{equation}
Thus $LK (., x)a = a\otimes \delta_x$, so we can view $K$ as an inverse of $L$, i.e. the inverse duality operator of $V$, and $K$ is often viewed as a convolution and thus $v = K \ast m$. In this way, the differential operator $L$ is used to construct the kernel $K$ and the space $V$.

\subsection{The EPDiff Equation}
\label{subsec: The EPDiff Equation}
For LDDMM image registration, to get an optimal path over $G_V$, the energy functional which is minimized over $\mathcal{X}_{V}^{2}$ takes the form
\begin{equation}
    E(v) = E_S(\varphi\cdot I_0, I_1)+ \frac{1}{2}\int_0 ^ 1 \|v(t)\|_{V}^2 dt.
    \label{eq: LDDMM energy}
\end{equation}
By using the calculus of variations, the geodesic equations can be retrieved. There are Euler-Poincar\'{e} equations on the diffeomorphism group and they are often denoted EPDiff:
\begin{align}
        &\frac{\partial m(t)}{\partial t} + v(t) \cdot \nabla m(t) + (\nabla v(t))^{T} \cdot m(t) + m(t) \nabla \cdot v(t) = 0, \label{eq: EPDiff}\\    
        &v(t) = K \ast m(t).    \label{eq: connect}
\end{align}
Given an optimal initial momentum $m(0)$, according to the time evolution of momentum \cref{eq: EPDiff} and the connection between momentum and velocity \cref{eq: connect}, 
the entire path of velocities $v(t)$ can be recovered, and the corresponding optimal deformation $\varphi$ also can be reconstructed via equation \cref{eq: differential equation}.

\section{Registration with Higher-order Kernels and Momenta}
\label{sec: Registration with Higher-order Kernels and Momenta}
The LDDMM framework only allows for local smooth translations using zeroth-order momentum, as demonstrated in \cref{fig: Gaussian kernel zeroth simulation} in \cref{sec: Introduction}. If we desire to create more general local deformation, higher-order momentum can be considered, as suggested by \cite{sommer_higher-order_2013}. 
Additionally, the use of non-differentiable function and its corresponding first-order momentum, as seen in \cref{fig: Multiplicative Wendland kernel first simulation}, enables the simulation of sliding motion. 
Motivated by these observations, in this section, we will present a sliding motion registration method that incorporates non-differentiable kernel and its corresponding first-order momentum within the LDDMM framework.
We first introduce the deformation encoded by higher-order kernels and momenta, then give the evolution equations and finally we introduce a new type of kernel that is based on non-differentiable function.

\subsection{High-order Kernels and Momenta}
\label{subsec: High-order Kernels and Momenta}
Inspired by previous work \cite{sommer_higher-order_2013,jacobs_higher-order_2014}, this paper aims to obtain a more comprehensive representation of the local deformation by incorporating partial derivatives of the kernel into the representation.
According to \cite{zhou_derivative_2008}, the reproducing property of $V$ also holds true for partial derivatives of the kernel.  
For any vector $v\in V$, its partial derivative $D^{\alpha}v $ at $x\in \Omega$ is defined as
\begin{equation}
    D^{\alpha}v(x) = D_x^\alpha v =\frac{\partial ^{|\alpha|}}{\partial(x^1)^{\alpha^1}\cdots\partial(x^d)^{\alpha^d} }v(x),
\end{equation}
where $\alpha = (\alpha^1, \cdots, \alpha^d)$ and $|\alpha| =\sum_{i=1}^d \alpha^j$. 
When using the definition of higher-order Diracs $a\otimes D_x^\alpha$ in \cite{sommer_higher-order_2013}, the partial derivative reproducing property follows that
\begin{equation}
   ( a\otimes D_x^\alpha |v) = \langle D^\alpha K (., x)a, v \rangle_V = a^T D_x^\alpha v.
\end{equation}
This implies
\begin{equation}
   \langle LD^\alpha K (., x)a, v \rangle_{L^2} = \langle D^\alpha K (., x)a, v \rangle_V =   ( a\otimes D_x^\alpha |v) =  a^T D_x^\alpha v = \langle a\otimes D_x^\alpha, v \rangle_{L^2},
\end{equation}
thus $LD^\alpha K (., x)a = a\otimes D_x^\alpha$.
As a consequence, the higher-order Diracs $a\otimes D_x^\alpha$ is connected to partial derivatives $D^\alpha K (., x)$ of the kernel.

In this paper, the focus is on the zeroth- and first-order partial derivatives of the kernel represented as
\begin{equation}
    v = K \ast m_0 + \sum_{i=1}^d D^i K \ast m_i,
    \label{eq: velocity decompose}
\end{equation}
where $D^i K$ is the partial derivative with respect to the $i$th coordinate and $m_i$ is the corresponding momentum for $i = 1, \cdots ,d$. 
Therefore equation \cref{eq: velocity decompose} extends the only zeroth-order momentum to a linear combination of zeroth- and first-order momenta. 
\begin{remark}
     Below, when modelling sliding motion along a boundary, we are generally only interested in the first-order directional derivative kernel along the unit tangent vector $w$ of the boundaries at point $\textbf{y}$. In this case, the linear combination of partial derivative kernels \cref{eq: velocity decompose} is equivalent to using the directional derivative kernel $\nabla_{w(\textbf{y})}K$. We generally use \cref{eq: velocity decompose} because it frees us from needing prior knowledge of the unit tangent vector $w(\textbf{y})$. 
\end{remark}

For the sake of simplicity, we denote $v_0 =  K \ast m_0 $, $v_i=D^i K \ast m_i, i = 1, \cdots,d$, then it is seen that the equation \cref{eq: velocity decompose} offers a decomposition of the velocity by 
\begin{equation}
    v = \sum_{i=0}^d v_i.
\end{equation} 
This extension allows for the velocity to be decomposed into a linear combination of zeroth- and first-order momenta, offering a multiscale structure for analyzing velocity at different scales. 
The regularization energy can be extended to:
\begin{equation}
    E_R(\varphi) = \frac{1}{2}\int_0 ^ 1 \sum_{i=0}^d \|v_i(t)\|_V^2 dt,
\end{equation}
with
\begin{equation}
   \sum_{i=0}^d \|v_i(t)\|_V^2
   = \left \langle m_0(t), v_0(t)\right \rangle_{L^2} + \sum_{i=1}^d \left \langle m_i(t), D^i v_i(t)\right \rangle_{L^2}.
\end{equation}
The analysis of extremal equations for the energy can still be performed since the partial derivatives of kernels are also members of $V$ and the first-order momentum similarly belongs to the dual space $V^{\ast}$.

\subsection{Euler-Lagrange equations}
\label{subsec: Euler-Lagrange equations}
We now detail the computation of the gradient for the new energy:
\begin{equation}
    E(v) =  E_S(\varphi_{01}^{v})+ \frac{1}{2}\int_0 ^ 1 \sum_{i=0}^d \|v_i(t)\|_V^2 dt,
    \label{eq: HOLDDMM energy}
\end{equation}
with $v(t) = \sum_{i=0}^d v_i(t) \in V$.
Considering a variation $h(t) = \sum_{i=0}^d h_i(t) \in V$ and calculate 
\begin{equation}
    \frac{d}{d\epsilon}E(\varphi_{01}^{v+\epsilon h})|_{\epsilon=0}  =  \frac{d}{d\epsilon}E_S(\varphi_{01}^{v+\epsilon h})|_{\epsilon=0} + \int_0^1 \left \langle v(t), h(t) \right \rangle_V dt. 
\end{equation}
Following \cite{younes_shapes_2019}, we can define adjoint operator $\text{Ad}_\varphi v$ and its conjugation $\text{Ad}_{\varphi}^{\ast}\rho$ for $v\in V$ and $\rho \in V^{\ast}$ as:
$\text{Ad}_\varphi v(x) = (D\varphi v)\circ \varphi^{-1}(x)$, $(\text{Ad}_{\varphi}^{\ast}\rho | v) = (\rho|\text{Ad}_\varphi v)$, we can also define 
$\text{Ad}_{\varphi}^{T}v = K(\text{Ad}_{\varphi}^{\ast}Lv)$, which then satisfies $\left \langle \text{Ad}_{\varphi}^{T}v, w\right \rangle_V = (\text{Ad}_{\varphi}^{\ast}Lv| w) = (Lv|\text{Ad}_{\varphi}w)$.
Let $\overline{\partial}E_S(\varphi_{01}^{v})$ denote the Eulerian differential of $E_S$ at $\varphi_{01}^{v}$ and it belongs to $V^{\ast}$, the corresponding V-Eulerian gradient at $\varphi_{01}^{v}$ is denoted as $\overline{\nabla}^V E_S(\varphi_{01}^{v}) \in V$.
It is shown in \cite{younes_shapes_2019} that 
\begin{equation}
    \frac{d}{d\epsilon}\varphi_{01}^{v+\epsilon h}(x)|_{\epsilon=0} = \int_0^1 (\text{Ad}_{\varphi_{t1}^v}h(t))\circ \varphi_{01}^v(x) dt.
\end{equation}
Letting $w_i = \int_0^1 \text{Ad}_{\varphi_{t1}^v}h_i(t)dt$ and $w = \sum_{i=0}^d w_i$, now let's calculate
$\frac{d}{d\epsilon}E_S(\varphi_{01}^{v+\epsilon h})|_{\epsilon=0} $:
\begin{align}
    \frac{d}{d\epsilon}E_S(\varphi_{01}^{v+\epsilon h})|_{\epsilon=0} 
    &= \partial_{\epsilon}E_S(\varphi_{0\epsilon}^{w} \circ \varphi_{01}^{v})|_{\epsilon=0}\nonumber\\
    &= \big(\overline{\partial}E_S(\varphi_{01}^{v})|w\big) \nonumber\\
    &= \sum_{i=0}^d \Big(\overline{\partial}E_S(\varphi_{01}^{v})| w_i\Big) \nonumber\\
    &= \sum_{i=0}^d \int_0^1 \Big(\overline{\partial}E_S(\varphi_{01}^{v})| \text{Ad}_{\varphi_{t1}^v}h_i(t)\Big) dt\nonumber\\
    &=  \int_0^1 \Big( \text{Ad}_{\varphi_{t1}^v}^{\ast}\overline{\partial}E_S(\varphi_{01}^{v})|h(t)\Big) dt\nonumber\\
    &= \int_0^1 \left \langle  \text{Ad}_{\varphi_{t1}^v}^{T}\overline{\nabla}^V E_S(\varphi_{01}^{v}) , h(t) \right \rangle_V dt.
\end{align}
From above equation we can derive the $\mathcal{X}_{V}^{2}$ gradient of $E_S$, i.e. the Eulerian gradient of $E_S$ at different time $\nabla E_S(v)(t) = \text{Ad}_{\varphi_{t1}^v}^{T}\overline{\nabla}^V E_S(\varphi_{01}^{v})$.
 So if $v \in \mathcal{X}_{V}^{2}$ is a minimizer for \cref{eq: HOLDDMM energy}, then for all $t$, we have
 \begin{equation}
 \label{eq: first-order velocity evolution}
     v(t) +   \text{Ad}_{\varphi_{t1}^v}^{T}\overline{\nabla}E_S(\varphi_{01}^{v}) = 0,
 \end{equation}
then we can get $v(t) = \text{Ad}_{\varphi_{t1}^v}^{T}v(1)$, where $ v(1)= -\overline{\nabla}E_S(\varphi_{01}^{v})$, further more we have $v(t) =  \text{Ad}_{\varphi_{t0}^v}^{T}v(0)$. Since $v(t) = \sum_{i=0}^d v_i(t)$, it also holds true for every $v_i(t)$: $v_i(t) =  \text{Ad}_{\varphi_{t0}^v}^{T}v_i(0), i =0,\cdots, d$.

If we denote the total momentum as $m(t) = \sum_{i=0}^d m_i(t)$, then we have 
\begin{align}
    (m(t)|w) 
             &=  \sum_{i=0}^d \left \langle v_i(t), w\right \rangle_V 
             = \sum_{i=0}^d \left \langle \text{Ad}_{\varphi_{t0}^v}^{T}v_i(0), w\right \rangle_V 
             = \sum_{i=0}^d \left \langle v_i(0), \text{Ad}_{\varphi_{t0}^v}w\right \rangle_V  \nonumber\\
             &= \sum_{i=0}^d (m_i(0)|\text{Ad}_{\varphi_{t0}^v}w) 
             = (m(0)|\text{Ad}_{\varphi_{t0}^v}w),
\end{align}
then
\begin{align}
    \partial_t(m(t)|w) &= (m(0)|\partial_t\text{Ad}_{\varphi_{t0}^v}w) 
    = -(m(0)|\text{Ad}_{\varphi_{t0}^v}\text{ad}_{v(t)}w)\nonumber\\
    &= -(m(t)|\text{ad}_{v(t)}w)
    = -(\text{ad}_{v(t)}^{\ast} m(t)|w),
\end{align}
so for total momentum, we have
\begin{equation}
 \label{eq: first-order momenta evolution}
    \partial_t m(t) = - \text{ad}_{v(t)}^{\ast} m(t),
\end{equation}
and for each momentum
\begin{equation}
    \partial_t m_i(t) = - \text{ad}_{v(t)}^{\ast} m_i(t), i= 0, \cdots, d.
\end{equation}

\subsection{Compactly Supported Reproducing Kernels}
\label{subsec: Compactly Supported Reproducing Kernels}
An alternative approach to constructing an admissible Hilbert space is to use reproducing kernels directly instead of deriving it from a differential operator, as introduced in the previous section. 
One commonly used kernel is the Gaussian kernel $K_g(\textbf{x}, \textbf{y}) = \text{exp}(-\frac{\|\textbf{x} - \textbf{y}\|^2}{\sigma_g^2})$, $\sigma_g > 0$. Since this kind of kernel is infinitely supported, the computation is computationally expensive. 
It is therefore essential that the applied kernel not only has compact support but also simultaneously has the reproducing property such that the constructed space is a reproducing kernel Hilbert space. 

For equation \cref{eq: velocity decompose}, the velocity field can be represented as a combination of basis functions. When the velocity field is smooth, using smooth kernels is effective. However, in cases of discontinuous velocity fields, such as sliding motion, the use of smooth kernels cannot produce the desired results. Therefore, to deal with sliding motion, we shift our focus towards non-differentiable kernels, which have been demonstrated to handle discontinuous deformation \cite{jud_sparse_2016}.
Our approach takes inspiration from \cite{pai_kernel_2016} that utilized compactly supported reproducing Wendland kernels to parameterize velocity fields,
but we concentrate on non-stationary velocity fields rather than the stationary velocity fields considered in \cite{pai_kernel_2016}.


The zeroth kind Wendland kernel is a viable option for addressing discontinuous deformation, as it not only has the reproducing property but also has compact support for efficient computation. Additionally, its non-differentiability attribute enables discontinuous deformation, making it a focus of this paper. The one-dimensional $C^0$ Wendland kernel takes the form
 \begin{equation}
  k_0 (x, y) = \left\{\left(1-\frac{\|x - y\|}{\sigma_0}\right)_+ \right\}^2,
\end{equation}
with $\sigma_0 > 0$, $a_+ = \text{max}(0, a)$. 
This kernel is not differentiable at $x = y$, as seen in \cref{fig:wendland2D}, and hence enables discontinuous deformation.

\begin{figure}[!htbp]
\setcounter{subfigure}{0} 
\centering
\subfigure[$C^0$ Wendland kernel]{\includegraphics[width=0.4\textwidth]{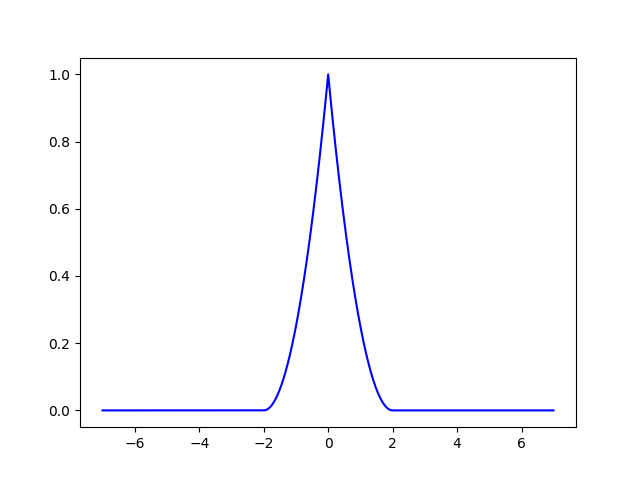}}
\subfigure[The derivative of $C^0$ Wendland kernel]{\includegraphics[width=0.4\textwidth]{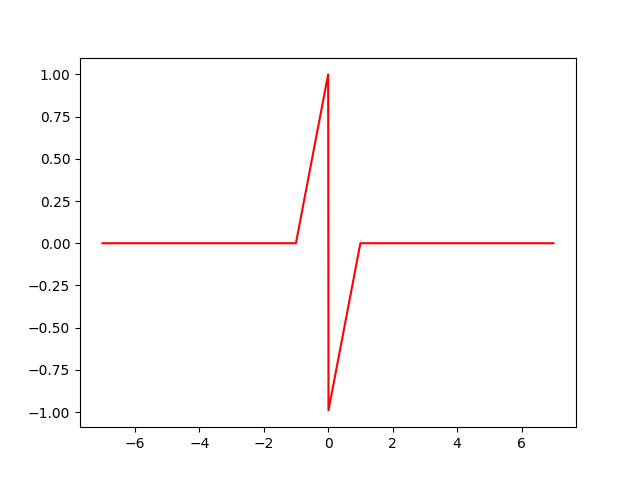}}
\caption{\label{fig:wendland2D}1-D $C^0$ Wendland kernel and its corresponding derivative kernel.}
\end{figure}

Because kernels are closed under product, i.e. the multiplication of valid kernels is also a valid kernel \cite{shawe-taylor_kernel_2004},
so for higher dimension $d$, 
we construct the kernel by multiplying one-dimensional $C^0$ Wendland kernels: 
\begin{equation}
  K_m (\textbf{x}, \textbf{y}) = \prod_{i=1}^{d} k_0 (x^i, y^i),
\end{equation}
where $\textbf{x} = (x^1,\cdots, x^d)$ and $\textbf{y} = (y^1,\cdots, y^d)\in R^d$.
The constructed multiplicative kernel hence is also a valid kernel. In \cref{fig: kernel and its derivative}, the 2-D multiplicative $C^0$ Wendland kernel and its corresponding two derivative kernels are plotted, as can be seen that this multiplicative kernel $K_m$ is non-differentiable at $\textbf{x} = \textbf{y}$ and the derivative kernels has discontinuous planes.
\begin{figure}[htbp]
\centering
\subfigure[$K_m$]{
\includegraphics[width=0.3\textwidth]{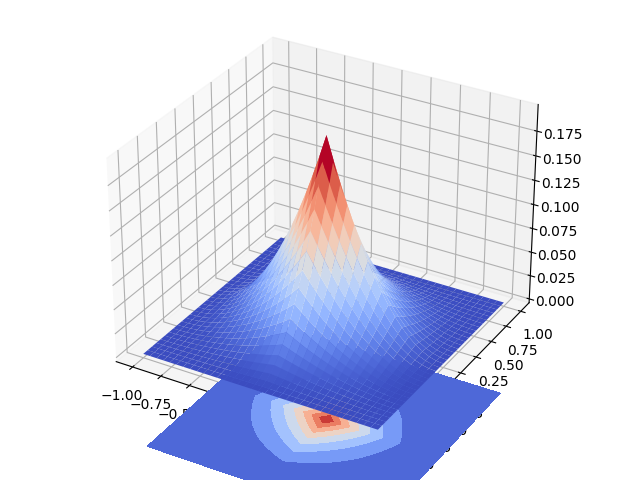}
}
\centering
\subfigure[$\nabla_x K_m$]{
\includegraphics[width=0.3\textwidth]{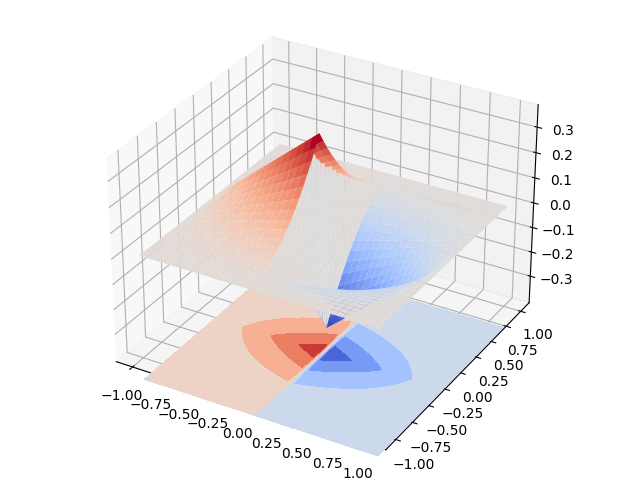}
}
\centering
\subfigure[$\nabla_y K_m$]{
\includegraphics[width=0.3\textwidth]{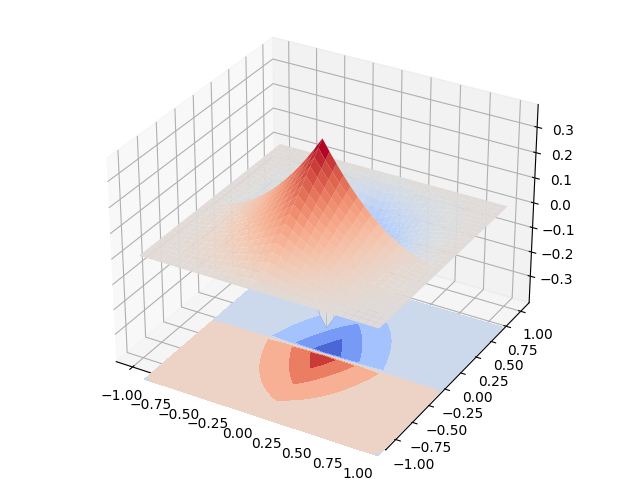}
}
\caption{\label{fig: kernel and its derivative}2-D multiplicative $C^0$ Wendland kernel and its corresponding two derivative kernels.}
\end{figure} 




To fully capture sliding motion, it is necessary to allow the momenta to exhibit sparsity across orders, allowing them to be zero under some orders and non-zero under others at the same spatial locations. 
To achieve this, a sparsity constraint is required.
The $L^1$-norm is commonly used as a sparsity penalty, so it is employed here as the sparsity prior. The incorporation of the sparsity term extends the registration functional \cref{eq: HOLDDMM energy} to:
\begin{equation}
    E(\varphi) =  E_S(\varphi)+  E_R(\varphi) + \sum_{i=0}^{d}\lambda_i\|m_i(0)\| ,
    \label{eq: sparse HOLDDMM energy}
\end{equation}
where $\lambda_i$ represents the sparsity weight on each momentum.


\section{Discontinuous velocity field}
\label{sec: Discontinuous velocity field}
The use of the non-smooth multiplicative Wendland kernel and its first-order momentum can result in a discontinuous velocity field, then causing a differential equation with discontinuous right-hand side. 
In this section, we will analyze the flow generated by this velocity field using the theories of non-smooth mechanical systems  \cite{leine_dynamics_2004, dieci_fundamental_2011} based on fundamental solution matrix and saltation matrix. The definition and discussion of the fundamental solution matrix and saltation matrix are given in \cref{subsec: Fundamental Solution Matrix} and \cref{subsec: Saltation matrix} respectively. Then the evolution equation for the non-smooth case is presented in \cref{subsec: Differentiation with Respect to the Vector Field}.

\subsection{Fundamental Solution Matrix}
\label{subsec: Fundamental Solution Matrix}
Let us consider a general differential system:
\begin{equation}
\label{eq: differential system}
    \frac{\partial \varphi(t, \textbf{x})}{\partial t} = v(t, \varphi(t, \textbf{x})), \varphi(0, \textbf{x}) = \textbf{x}, \forall~\textbf{x} \in \Omega.
\end{equation}
Given a reference initial point $\textbf{x}_0$ at time zero, $\varphi_{0t}^v(\textbf{x}_0)$ is the position of $\textbf{x}_0$ at time $t$. We are interested in the time evolution of the difference $\Delta \textbf{x}(t)$ between the reference trajectory $\varphi_{0t}^v(\textbf{x}_0)$ and the trajectory $\varphi_{0t}^v(\textbf{y}_0)$ with initial point $\textbf{y}_0$, which is any possible infinitesimal perturbation of $\textbf{x}_0$:

\begin{equation} 
\label{eq: evolution of the difference}
\Delta \textbf{x}(t)=\varphi_{0t}^v(\textbf{y}_0) -\varphi_{0t}^v(\textbf{x}_0) = D\varphi_{0t}^v(\textbf{x}_0)(\textbf{y}_0 - \textbf{x}_0) + O(\|\textbf{y}_0 - \textbf{x}_0\|^2),
\end{equation}
where $\textbf{y}_0 = \textbf{x}_0 + \Delta \textbf{x}_0$, $\Delta \textbf{x}_0$ represents the infinitesimal disturbance of $\textbf{x}_0$.

In order to analyze the flow, we first give a definition of the fundamental solution matrix as follows based on \cite{bizzarri_necessary_2016}.
\begin{definition}[Fundamental solution matrix]
\label{def: Fundamental solution matrix}
A fundamental solution matrix is the first-order expansion of the flow of perturbations around the reference trajectory.
\end{definition}
According to the above definition, the matrix $D\varphi_{0t}^v(\textbf{x}_0)$ in \cref{eq: evolution of the difference} is the defined fundamental solution matrix along $\varphi_{0t}^v(\textbf{x}_0)$ to transit the initial difference $\Delta \textbf{x}_0$ to latter difference at time $t$, so it is also known as the state transition matrix.
The following proposition is quite important for our analysis.
\begin{proposition}[Transition property]
For $0<t_1<t$, the flow $\varphi_{0t}^v(\textbf{x}_0)$ starts from the initial point $\textbf{x}_0$ at time $t_0$ and arrives at the point $\textbf{x}_1$ in time $t_1$, then the fundamental solution matrix satisfies
\begin{equation}
\label{eq: Transition property}
D\varphi_{0t}^v(\textbf{x}_0) = D\varphi_{t_1t}^v(\textbf{x}_1) D\varphi_{0t_1}^v(\textbf{x}_0).
\end{equation}
\end{proposition}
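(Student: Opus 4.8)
The plan is to reduce the transition property to the chain rule, once the composition (two-parameter semigroup) property of the flow has been established, namely that $\varphi_{t_1t}^v \circ \varphi_{0t_1}^v = \varphi_{0t}^v$ for $0<t_1<t$. First I would fix the reference initial point $\textbf{x}_0$ and set $\textbf{x}_1 = \varphi_{0t_1}^v(\textbf{x}_0)$, so that $\textbf{x}_1$ is exactly the position at time $t_1$ named in the statement. To prove the composition identity I consider the two curves $s \mapsto \varphi_{0s}^v(\textbf{x}_0)$ and $s \mapsto \varphi_{t_1s}^v(\textbf{x}_1)$ on the interval $[t_1, t]$. Both solve the non-autonomous system \cref{eq: differential system} and both take the value $\textbf{x}_1$ at $s=t_1$: the first by the definition of $\textbf{x}_1$, the second because $\varphi_{t_1s}^v$ is by definition the flow started at time $t_1$ from its argument. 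By uniqueness of solutions of the initial value problem, the two curves coincide on $[t_1,t]$; evaluating at $s=t$ gives $\varphi_{0t}^v(\textbf{x}_0) = \varphi_{t_1t}^v(\textbf{x}_1) = \varphi_{t_1t}^v\big(\varphi_{0t_1}^v(\textbf{x}_0)\big)$, which is the desired composition identity.

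Second, I would differentiate this identity with respect to the initial point. Viewing $\varphi_{0t}^v = \varphi_{t_1t}^v \circ \varphi_{0t_1}^v$ as an equality of maps of $\Omega$ and applying the chain rule yields
\[
D\varphi_{0t}^v(\textbf{x}_0) = D\varphi_{t_1t}^v\big(\varphi_{0t_1}^v(\textbf{x}_0)\big)\, D\varphi_{0t_1}^v(\textbf{x}_0) = D\varphi_{t_1t}^v(\textbf{x}_1)\, D\varphi_{0t_1}^v(\textbf{x}_0),
\]
which is exactly \cref{eq: Transition property}. Since, by \cref{def: Fundamental solution matrix}, each factor $D\varphi_{\cdot\,\cdot}^v$ is the fundamental solution matrix along the corresponding sub-arc of the trajectory, this shows that the fundamental solution matrix over $[0,t]$ factors as the product of the fundamental solution matrices over $[t_1,t]$ and $[0,t_1]$, transported through the intermediate point $\textbf{x}_1$.

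The main obstacle is justifying differentiability of the flow maps so that the chain rule is legitimate: for a smooth velocity field this is the standard smooth dependence on initial conditions, but the velocity fields of interest here are only piecewise smooth, with jumps across the switching boundary. I would therefore state this proposition for reference trajectories whose sub-arcs on $[0,t_1]$ and $[t_1,t]$ stay within a region where $v$ is $C^1$, so that $\varphi_{0t_1}^v$ and $\varphi_{t_1t}^v$ are classically differentiable and the composition identity may be differentiated termwise. The treatment of trajectories that actually strike the switching boundary, where $D\varphi$ jumps and must be corrected by a saltation matrix, is deferred to the subsequent subsection; the transition property proved here is precisely the smooth building block with which the saltation factors are later composed.
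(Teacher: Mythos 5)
Your proof is correct, and it reaches the product formula by a genuinely different (and somewhat cleaner) route than the paper. You first establish the two-parameter semigroup identity $\varphi_{0t}^v=\varphi_{t_1t}^v\circ\varphi_{0t_1}^v$ via uniqueness of solutions of the initial value problem, and then differentiate it with the chain rule to read off $D\varphi_{0t}^v(\textbf{x}_0)=D\varphi_{t_1t}^v(\textbf{x}_1)\,D\varphi_{0t_1}^v(\textbf{x}_0)$. The paper never states the composition identity explicitly; instead it works directly with the defining first-order expansion of the perturbation flow, writing $\Delta\textbf{x}(t_1)=D\varphi_{0t_1}^v(\textbf{x}_0)\Delta\textbf{x}_0+O(\|\Delta\textbf{x}_0\|^2)$ and $\Delta\textbf{x}(t)=D\varphi_{t_1t}^v(\textbf{x}_1)\Delta\textbf{x}(t_1)+O(\|\Delta\textbf{x}(t_1)\|^2)$, substituting one into the other and matching the linear terms. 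That argument is tailored to the paper's definition of the fundamental solution matrix as ``the first-order expansion of the flow of perturbations,'' so it applies whenever such an expansion exists without presupposing that $D\varphi$ is a classical Jacobian; your version does presuppose classical differentiability of the flow maps, which you correctly flag and resolve by restricting to reference trajectories whose sub-arcs stay in regions where $v$ is $C^1$, deferring boundary-hitting trajectories to the saltation-matrix analysis --- which is exactly the role the proposition plays in the paper as well. In short, yours is the chain rule made explicit and justified by a uniqueness lemma; the paper's is the same chain rule carried out at the level of Taylor expansions of perturbed trajectories, and each is adequate for the smooth building block being established here.
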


 \begin{proof}
 Using \cref{eq: evolution of the difference}, we have
 \begin{align*}
 \Delta \textbf{x}(t_1)&= D\varphi_{0t_1}^v(\textbf{x}_0)\Delta \textbf{x}_0 + O(\|\Delta \textbf{x}_0\|^2),\\
\Delta \textbf{x}(t)&= D\varphi_{t_1t}^v(\textbf{x}_1)\Delta \textbf{x}(t_1) + O(\|\Delta \textbf{x}(t_1)\|^2),
 \end{align*}
 so we will get
 \begin{align*}
 \Delta \textbf{x}(t)
 &= D\varphi_{t_1t}^v(\textbf{x}_1)(D\varphi_{0t_1}^v(\textbf{x}_0)\Delta \textbf{x}_0 + O(\|\Delta \textbf{x}_0\|^2)) + O(\|\Delta \textbf{x}(t_1)\|^2),\\
&= D\varphi_{t_1t}^v(\textbf{x}_1)D\varphi_{0t_1}^v(\textbf{x}_0)\Delta \textbf{x}_0 + \text{h.o.t},\\
&=D\varphi_{0t}^v(\textbf{x}_0)\Delta \textbf{x}_0 + \text{h.o.t},
 \end{align*}
 where "h.o.t" stands for higher-order terms.
 Thus for $0<t_1<t$, we have 
 \begin{displaymath}
 D\varphi_{0t}^v(\textbf{x}_0) = D\varphi_{t_1t}^v(\textbf{x}_1) D\varphi_{0t_1}^v(\textbf{x}_0).
 \end{displaymath}
 \end{proof}

Due to the incorporation of first-order momenta based on non-smooth function, the velocity field generated by our proposed method could be discontinuous. The differential system \cref{eq: differential system} stemming from this velocity field may come with discontinuous right-hand side.
For a point $\textbf{x}$, if the corresponding first-order momenta at this point is non-zero, then they will cause discontinuous velocity at a hyper-surface $\Sigma$ passing through $\textbf{x}$. As the velocity fields switch at $\Sigma$, the hyper-surface $\Sigma$ is also called the switching boundary. The non-zero first-order momenta determine the location of the switching boundary, and since we have imposed a sparsity constraint on the momenta across the order, we can assume that the switching boundaries are finite.
The switching boundary $\Sigma$ can be represented as the zero level set of a function $H: \Omega\rightarrow R$, if the point $\textbf{x}$ is on $\Sigma$, then $H(\textbf{x}) = 0 \Longleftrightarrow  \textbf{x} \in \Sigma$.


If the trajectory $\varphi_{0t}^v(\textbf{x}_0)$ never hits any switching boundary $\Sigma$ then it's a smooth case, the fundamental solution matrix $D\varphi_{0t}^v(\textbf{x}_0)$ is nothing but the time-dependent Jacobian matrix which can be obtained from the following initial value problem:
\begin{displaymath}
    \begin{cases}
    \frac{\partial D\varphi_{0t}^{v}(\textbf{x}_0)}{\partial t} =  Dv(t,\varphi_{0t}^{v}(\textbf{x}_0))D\varphi_{0t}^{v}(\textbf{x}_0)\\ D\varphi_{00}^{v}(\textbf{x}_0) = I.
    \end{cases}
\end{displaymath}
By integrating we can obtain $D\varphi_{0t}^{v}(\textbf{x}_0)= I+\int_0^t Dv(u,\varphi_{0u}^{v}(\textbf{x}_0))D\varphi_{0u}^{v}(\textbf{x}_0) du$.

The interesting case is when the trajectory hits switching boundaries leading to a jump in the fundamental solution matrix. Firstly we consider the case when it hits a single switching boundary $\Sigma_1$ only once at point $\textbf{x}_1$ in time $t_1$, then there is a jump behavior at $t_1$ for the fundamental matrix solution. Assume that the jump can be expressed with a matrix $S$, which maps the fundamental solution matrix before jump $D\varphi_{0t_1^-}^v(\textbf{x}_0)$ to the fundamental solution matrix after jump $D\varphi_{0t_1^+}^v(\textbf{x}_0)$:
\begin{equation}
\label{eq: saltation matrix}
    D\varphi_{0t_1^+}^v(\textbf{x}_0)=SD\varphi_{0t_1^-}^v(\textbf{x}_0),
\end{equation}
where $D\varphi_{0t_1^{\pm}}^v(\textbf{x}_0) = \lim_{t\to t_1^{\pm}}D\varphi_{0t_1}^v(\textbf{x}_0)$.

\begin{definition}[Saltation matrix]
    The matrix $S$ in \cref{eq: saltation matrix} is called saltation or jump matrix, describing the jump between $D\varphi_{0t_1^-}^v(\textbf{x}_0)$ and $D\varphi_{0t_1^+}^v(\textbf{x}_0)$.
\end{definition}
The saltation matrix $S$ can also be regarded as a fundamental solution matrix from time $t_1^-$ to $t_1^+$
\begin{displaymath}
S = D\varphi_{t_1^-t_1^+}^v(\textbf{x}_1).
\end{displaymath}

Now by means of the saltation matrix and transition property of the fundamental solution matrix, we can construct $D\varphi_{0t}^v(\textbf{x}_0)$ for $t>t_1$ as
\begin{align}
    \label{eq: product fundamental solution matrix}
    D\varphi_{0t}^v(\textbf{x}_0) &= D\varphi_{t_1^+t}^v(\textbf{x}_1) D\varphi_{0t_1^+}^v(\textbf{x}_0) \nonumber\\ 
    &=D\varphi_{t_1^+t}^v(\textbf{x}_1) SD\varphi_{0t_1^-}^v(\textbf{x}_0).
\end{align}
For $D\varphi_{0t_1^-}^v(\textbf{x}_0)$, it satisfies the following initial value problem:
\begin{equation}
    \begin{cases}
        \frac{\partial D\varphi_{0t}^{v}(x)}{\partial t} = Dv(t,\varphi_{0t}^{v}(x))D\varphi_{0t}^{v}(x)\\ D\varphi_{00}^{v}(x) = I.
    \end{cases}\label{eq: initial one}
\end{equation}
By integrating we have
\begin{equation}
\label{eq: before jump}
    D\varphi_{0t_1^-}^v(\textbf{x}_0)=I+ \int_0^{t_1^-}Dv(u,\varphi_{0u}^{v}(\textbf{x}_0))D\varphi_{0u}^{v}(\textbf{x}_0) du. 
\end{equation}     
For $D\varphi_{t_1^+t}^v(\textbf{x}_1)$, it satisfies the following initial value problem:
        \begin{equation}
        \begin{cases}
        \frac{\partial D\varphi_{t_1t}^{v}(\textbf{x}_1)}{\partial t} = Dv(t,\varphi_{t_1t}^{v}(\textbf{x}_1))D\varphi_{t_1t}^{v}(\textbf{x}_1)\\ D\varphi_{t_1t_1}^{v}(\textbf{x}_1) = I,
        \end{cases}\label{eq: initial two}
        \end{equation}
we also get
\begin{equation}
\label{eq: after jump}
    D\varphi_{t_1^+t}^v(\textbf{x}_1)=I+ \int_{t_1^+}^tDv(u,\varphi_{t_1^+u}^{v}(\textbf{x}_1))D\varphi_{t_1^+u}^{v}(\textbf{x}_1) du.
\end{equation}
Combining \cref{eq: product fundamental solution matrix} with \cref{eq: before jump,eq: after jump}, we can obtain the expression for the fundamental solution matrix $D\varphi_{0t}^v(\textbf{x}_0)$ for the discontinuous case when $t>t_1$.

Now thinking about the more complicated case that the flow hits boundaries at multiple but finite time points or switching boundaries. We start with two time points, assume the flow $\varphi_{0t}^v(\textbf{x}_0)$ hits switching boundaries at $(t_1, \textbf{x}_1)$ and $(t_2, \textbf{x}_2)$, $\textbf{x}_1 = \varphi_{0t_1}^v(\textbf{x}_0),\textbf{x}_2 = \varphi_{0t_2}^v(\textbf{x}_0)$. When $0<t_1<t_2<t$, using the transit property we can write the fundamental solution matrix $D\varphi_{0t}^v(\textbf{x}_0)$ as the product of different saltation matrices 
    \begin{align*}
    D\varphi_{0t}^v(\textbf{x}_0) 
    &=D\varphi_{t_2^+t}^v(\textbf{x}_2) S_2D\varphi_{t_1^+t_2^-}^v(\textbf{x}_1) S_1D\varphi_{0t_1^-}^v(\textbf{x}_0),
    \end{align*}
where the saltation matrices $S_1$ and $S_2$ are the description of the jump at time $t_1$ and $t_2$ respectively
\begin{displaymath}
S_1 = D\varphi_{t_1^-t_1^+}^v(\textbf{x}_1), S_2 = D\varphi_{t_2^-t_2^+}^v(\textbf{x}_2).
\end{displaymath}
For the fundamental solution matrices $D\varphi_{0t_1^-}^v(\textbf{x}_0),D\varphi_{t_1^+t_2^-}^v(\textbf{x}_1)$ and $D\varphi_{t_2^+t}^v(\textbf{x}_2)$, they all satisfy the corresponding initial value problems like \cref{eq: initial one,eq: initial two}. Combining all these expressions together, we can finally obtain the fundamental solution matrix 
$D\varphi_{0t}^v(\textbf{x}_0)$.

For more but finite time points, we can get an analogous expression of the fundamental solution matrix by multiplying one saltation matrix for each switching boundary. Since the fundamental solution matrix depends on the saltation matrix, next we will discuss the construction of the saltation matrix.


\subsection{Saltation matrix}
\label{subsec: Saltation matrix}
When a trajectory hits a switching boundary it can exhibit two distinct behaviors: crossing the boundary transversally or sliding along it. Different behaviors mean different forms for the saltation matrix $S$. 
In this section, we will give the expression of the saltation matrix based on \cite{leine_dynamics_2004}.

We first consider the case of transversal crossing switching boundaries. Assume $v^-$ and $v^+$ are the velocities of the flow $\varphi_{0t}^v(\textbf{x}_0)$ before and after hitting the switching boundary. For the switching boundary $\Sigma$, it is varying with the first-order momenta at different times, so we use $\Sigma(t)$ instead of $\Sigma$. We can also represent $\Sigma(t)$ as the zero level set of a function $H(t)$: 
if $\textbf{x} = \varphi_{0t}^v(\textbf{x}_0)\in \Sigma(t)$ if and only if $H(t,\textbf{x}) = 0$.

\begin{theorem}
    Assume the flow $\varphi_{0t}^v(\textbf{x}_0)$ hits the time-varying switching boundary $\Sigma(t)$ at point $\textbf{x}_1$ in time $t_1$ and then transversal cross it, $n(t_1, \textbf{x}_1)$ is the unit normal to $\Sigma(t_1)$ at $\textbf{x}_1$, then the saltation matrix has the form:
    \begin{equation}
   S = I + \frac{(v^+(t_1,\textbf{x}_1) - v^-(t_1,\textbf{x}_1))n^T(t_1, \textbf{x}_1)}{n^T(t_1, \textbf{x}_1) v^-(t_1,\textbf{x}_1)+\frac{\partial H(t_1, \textbf{x}_1)}{\partial t}}. 
   \label{eq: cross saltation}
\end{equation}
\end{theorem}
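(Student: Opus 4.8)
The plan is to obtain $S$ through a first-order perturbation analysis that compares the reference trajectory $\varphi_{0t}^v(\textbf{x}_0)$, which meets $\Sigma(t)$ at $\textbf{x}_1$ at time $t_1$, with a neighbouring trajectory $\varphi_{0t}^v(\textbf{y}_0)$ issuing from $\textbf{y}_0 = \textbf{x}_0 + \Delta\textbf{x}_0$. Writing $\Delta\textbf{x}^- := \Delta\textbf{x}(t_1^-)$ and $\Delta\textbf{x}^+ := \Delta\textbf{x}(t_1^+)$ for the separations just before and just after the reference crossing, and recalling from \cref{eq: evolution of the difference} that $\Delta\textbf{x}(t) = D\varphi_{0t}^v(\textbf{x}_0)\Delta\textbf{x}_0$ to first order, it suffices to produce the linear map $\Delta\textbf{x}^+ = S\,\Delta\textbf{x}^-$; this immediately yields $D\varphi_{0t_1^+}^v(\textbf{x}_0) = S\,D\varphi_{0t_1^-}^v(\textbf{x}_0)$, which is \cref{eq: saltation matrix}. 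The mechanism producing a nontrivial $S$ is that the two trajectories do not reach $\Sigma$ simultaneously: the perturbed one crosses at a shifted time $t_1 + \delta t$, and throughout the interval of length $|\delta t|$ exactly one trajectory already obeys the post-crossing field $v^+$ while the other still obeys $v^-$, which is precisely what generates the rank-one correction to the identity.

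First I would compute the crossing-time mismatch $\delta t$. Both crossing points lie on the zero level set of $H$, so I impose $H(t_1,\textbf{x}_1)=0$ together with the analogous condition at the perturbed crossing and expand the latter to first order. Inserting the perturbed position at its crossing, $\textbf{x}_1 + \Delta\textbf{x}^- + v^-(t_1,\textbf{x}_1)\,\delta t + \text{h.o.t.}$, and writing $\partial_t H$ for the explicit time derivative and $n(t_1,\textbf{x}_1)$ for the spatial gradient of $H$ normalised to the unit normal, the level-set constraint reduces to $n^T\Delta\textbf{x}^- + \big(n^T v^- + \partial_t H\big)\delta t = 0$, whence
\[
\delta t = -\,\frac{n^T(t_1,\textbf{x}_1)\,\Delta\textbf{x}^-}{\,n^T(t_1,\textbf{x}_1)\,v^-(t_1,\textbf{x}_1) + \partial_t H(t_1,\textbf{x}_1)\,}.
\]
Every velocity and gradient may be frozen at the reference crossing $(t_1,\textbf{x}_1)$, since re-evaluating them at the perturbed location only changes the result at order $\|\Delta\textbf{x}_0\|^2$, consistent with the first-order truncation already used in \cref{eq: evolution of the difference}.

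Next I would propagate both trajectories to a common time $t$ just beyond both crossings and subtract. The reference satisfies $\varphi_{0t}^v(\textbf{x}_0) = \textbf{x}_1 + v^+(t-t_1)$ to first order, while the perturbed trajectory, having moved with $v^-$ up to $t_1+\delta t$ and with $v^+$ afterwards, satisfies $\varphi_{0t}^v(\textbf{y}_0) = \textbf{x}_1 + \Delta\textbf{x}^- + v^-\,\delta t + v^+\big(t-t_1-\delta t\big)$. On subtracting, the explicit dependence on $t$ cancels and the separation settles to $\Delta\textbf{x}^+ = \Delta\textbf{x}^- - (v^+-v^-)\,\delta t$. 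Substituting the value of $\delta t$ from the previous step gives $\Delta\textbf{x}^+ = \big[\,I + (v^+-v^-)n^T/(n^T v^- + \partial_t H)\,\big]\Delta\textbf{x}^-$, which is exactly the saltation matrix of \cref{eq: cross saltation}.

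The main obstacle is careful bookkeeping of the time mismatch rather than any deep analytic difficulty. One must keep track that, on the interval between the two crossing times, exactly one trajectory has switched fields; handle the sign of $\delta t$ uniformly, checking that the first-order map is identical whether the perturbed trajectory crosses earlier or later; and maintain a consistent normalisation of $H$ so that $n$ enters both numerator and denominator as a genuine unit normal while $\partial_t H$ appears unscaled. It is also worth making the transversality hypothesis explicit: the denominator $n^T v^- + \partial_t H$ is the rate of change of $H$ along the reference flow as it meets $\Sigma(t)$, so transversal crossing is exactly the requirement that this quantity be nonzero, which is what makes $\delta t$ well defined and $S$ finite; the tangential (sliding) case, where it degenerates, must be treated separately.
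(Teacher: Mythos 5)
Your proposal is correct and follows essentially the same route as the paper's proof: perturb the initial condition, expand the level-set condition $H=0$ at the perturbed crossing to solve for the crossing-time mismatch $\Delta t$, and substitute into the first-order difference of the two trajectories (one still on $v^-$, one already on $v^+$) to read off the rank-one correction. Your remarks on the sign of $\delta t$ and on transversality guaranteeing a nonzero denominator are sensible refinements of the same argument rather than a different approach.
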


\begin{proof}
   Let $\textbf{y}_0$ be the infinitesimal perturbed initial condition of $\textbf{x}_0$: $\textbf{y}_0 = \textbf{x}_0 + \Delta \textbf{x}_0$. Suppose the trajectories of $\textbf{x}_0$ and $\textbf{y}_0$ hit the switching boundary $\Sigma(t)$ at $(t_1, \textbf{x}_1)$ and $(t_1+\Delta t,\textbf{y}_1)$, respectively (see \cref{fig:Disturbed and undisturbed trajectory }). 
 \begin{figure}[htbp]
\centering
\includegraphics[width=0.7\textwidth]{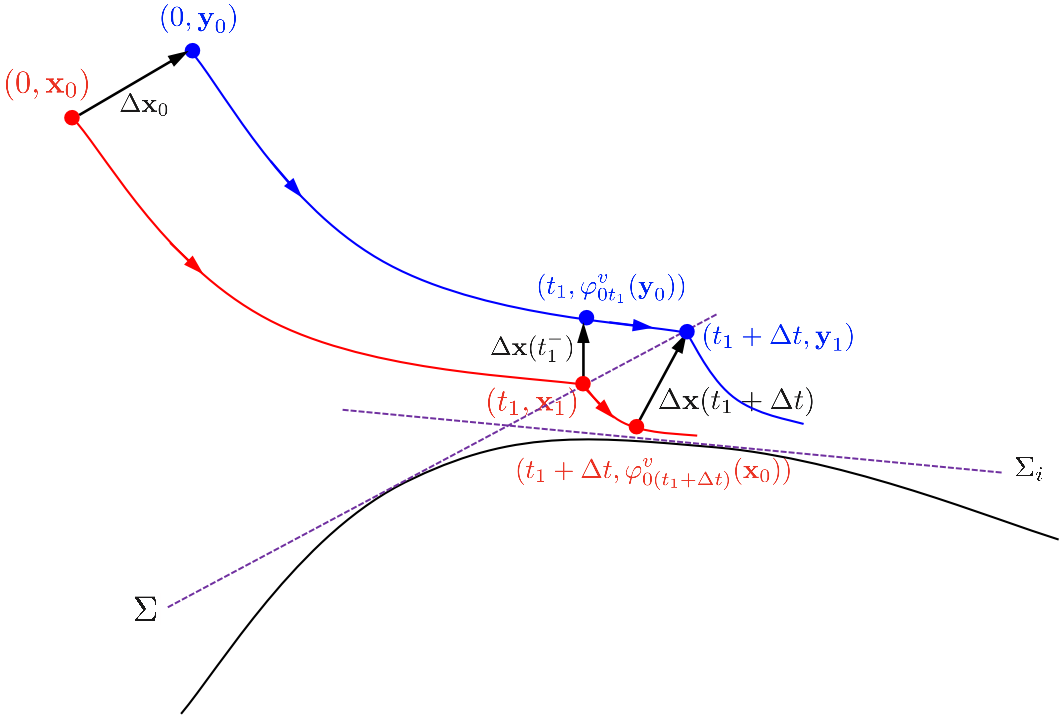}
\caption{\label{fig:Disturbed and undisturbed trajectory }Disturbed and undisturbed trajectory.}
\end{figure}

The difference between the disturbed and undisturbed solutions at time $t_1$ is denoted by
\begin{equation*}
\Delta \textbf{x}(t_1^-) = \varphi_{0t_1}^v(\textbf{y}_0) - \varphi_{0t_1}^v(\textbf{x}_0). 
\end{equation*}
The first-order Taylor expansions for the disturbed and undisturbed solutions at time $t_1+\Delta t$ are given by
\begin{align*}
\varphi_{0(t_1+\Delta t)}^v(\textbf{y}_0) 
&\approx \textbf{x}_1 + \Delta \textbf{x}(t_1^-)+ v^-(t_1,\textbf{x}_1)\cdot\Delta t + O(\Delta t^2), \\
\varphi_{0(t_1+\Delta t)}^v(\textbf{x}_0) &=\textbf{x}_1 + v^+(t_1,\textbf{x}_1)\cdot\Delta t,
\end{align*}
where $v^-(t_1,\textbf{x}_1)$ and $v^+(t_1,\textbf{x}_1)$ 
are the velocity of the flow $\varphi_{0t}^v(\textbf{x}_0)$ before and after hitting $\Sigma(t_1)$.
Then we can obtain the difference between the disturbed and undisturbed solutions at time $t_1+\Delta t$:
\begin{align}
\Delta \textbf{x}(t_1 + \Delta t) 
&\approx \Delta \textbf{x}(t_1^-)+ (v^-(t_1,\textbf{x}_1) - v^+(t_1,\textbf{x}_1))\Delta t.
\label{eq: disturb difference}
\end{align}
Then
\begin{align*}
H(t_1+\Delta t,\textbf{y}_1) 
&\approx H(t_1,\textbf{x}_1) + \frac{\partial H(t_1,\textbf{x}_1)}{\partial t}\cdot \Delta t +(\nabla H(t_1,\textbf{x}_1))^T\Big(\Delta \textbf{x}(t_1^-)+ v^-(t_1,\textbf{x}_1)\cdot\Delta t\Big) \\ 
&= (\nabla H(t_1,\textbf{x}_1))^T\Big(\Delta \textbf{x}(t_1^-)+ v^-(t_1,\textbf{x}_1)\cdot\Delta t\Big)+\frac{\partial H(t_1,\textbf{x}_1)}{\partial t}\cdot \Delta t  \\
&=0,  
\end{align*}
so we can get 
\begin{align}
\label{eq: time difference}  
  \Delta t &= -\frac{(\nabla H(t_1,\textbf{x}_1))^T \Delta \textbf{x}(t_1^-)}{(\nabla H(t_1,\textbf{x}_1))^T v^-(t_1,\textbf{x}_1)+\frac{\partial H(t_1, \textbf{x}_1)}{\partial t}} \nonumber \\
  &=-\frac{n^T(t_1, \textbf{x}_1) \Delta \textbf{x}(t_1^-)}{n^T(t_1, \textbf{x}_1) v^-(t_1,\textbf{x}_1)+\frac{\partial H(t_1, \textbf{x}_1)}{\partial t}},
\end{align}
where $n$ is the unit normal to $\Sigma(t_1)$ at $\textbf{x}_1$.
By inserting equation \cref{eq: time difference} into \cref{eq: disturb difference} we can get:
\begin{align*}
\Delta \textbf{x}(t_1 + \Delta t) 
&= \Delta \textbf{x}(t_1^-)+ \Big(v^-(t_1,\textbf{x}_1) - v^+(t_1,\textbf{x}_1)\Big)  \Big( -\frac{n^T(t_1, \textbf{x}_1) \Delta \textbf{x}(t_1^-)}{n^T(t_1, \textbf{x}_1) v^-(t_1,\textbf{x}_1)+\frac{\partial H(t_1, \textbf{x}_1)}{\partial t}}\Big)\\ 
&=\Big(I + \frac{(v^+(t_1,\textbf{x}_1) - v^-(t_1,\textbf{x}_1))n^T(t_1, \textbf{x}_1)}{n^T(t_1, \textbf{x}_1) v^-(t_1,\textbf{x}_1)+\frac{\partial H(t_1, \textbf{x}_1)}{\partial t}}\Big)\Delta \textbf{x}(t_1^-).
\end{align*}
Letting $\Delta t\to 0^+$, we can obtain $\Delta \textbf{x}(t_1^+) = S \Delta \textbf{x}(t_1^-)$, where
\begin{equation*}
   S = I + \frac{(v^+(t_1,\textbf{x}_1) - v^-(t_1,\textbf{x}_1))n^T(t_1, \textbf{x}_1)}{n^T(t_1, \textbf{x}_1) v^-(t_1,\textbf{x}_1)+\frac{\partial H(t_1, \textbf{x}_1)}{\partial t}}, 
\end{equation*} 
so that \cref{eq: cross saltation} follows.
\end{proof}

\begin{remark}
    When the trajectory hits $\Sigma(t_1)$ and does not cross it but slide along it, then the saltation matrix has another formulation: $S = I - n(t_1, \textbf{x}_1)n^T(t_1, \textbf{x}_1)$, the velocity after hitting $\Sigma(t_1)$ becomes $v^+(t_1,\textbf{x}_1) = Sv^-(t_1,\textbf{x}_1)$, which is tangent to the switching boundary leading to the sliding motion.
\end{remark}


After constructing the saltation matrix, the fundamental solution matrix $D\varphi_{0t}^v(\textbf{x}_0)$ and the adjoint operator are well-defined. We can then use them to derive the evolution equation for our non-smooth case.

\subsection{Differentiation with Respect to the Vector Field}
\label{subsec: Differentiation with Respect to the Vector Field}
We now study the differentiability of the flow generated by our proposed method with respect to the vector field. By leveraging the saltation matrix, we can redefine the adjoint operator $\text{Ad}_\varphi v$ using a uniform expression, analogous to the smooth case, with the fundamental solution matrix:
$\text{Ad}_\varphi v(\textbf{x}) = (D\varphi v)\circ \varphi^{-1}(\textbf{x})$.
Consequently, the differentiation with respect to $v$ also takes a uniform form as the smooth case, as we demonstrated in the following theorem.
\begin{theorem}
Let $v(t,\cdot), h(t,\cdot): \Omega \rightarrow R^{d}$ are the vector fields generated by first-order momenta based on non-differentiable function, then for $\textbf{x}_0 \in \Omega$, the generated flow is differentiable with respect to $v$ and
\begin{equation}
    \partial_v\varphi_{0t}^v h = \int_0^t (\text{Ad}_{\varphi_{ut}^v}h(u))\circ \varphi_{0t}^v(\textbf{x}_0) du.
\end{equation}
\end{theorem}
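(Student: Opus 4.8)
The plan is to reduce the statement to its classical (smooth) analogue, which was recorded earlier from \cite{younes_shapes_2019}, and to handle the finitely many boundary crossings separately by means of the saltation matrix. Write $\psi(t) := \frac{d}{d\epsilon}\varphi_{0t}^{v+\epsilon h}(\textbf{x}_0)\big|_{\epsilon=0}$ for the Gateaux derivative whose existence and value we must establish, and abbreviate $\textbf{x}_u := \varphi_{0u}^v(\textbf{x}_0)$. The whole statement reduces to proving the Duhamel-type representation
\begin{equation*}
\psi(t) = \int_0^t D\varphi_{ut}^v(\textbf{x}_u)\, h(u,\textbf{x}_u)\, du,
\end{equation*}
where $D\varphi_{ut}^v$ is the generalized fundamental solution matrix assembled from smooth pieces and saltation matrices. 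The claimed $\text{Ad}$ form is then immediate from the redefined adjoint operator: since $(\varphi_{ut}^v)^{-1}\circ\varphi_{0t}^v(\textbf{x}_0)=\textbf{x}_u$, we have $D\varphi_{ut}^v(\textbf{x}_u)h(u,\textbf{x}_u) = (\text{Ad}_{\varphi_{ut}^v}h(u))\circ\varphi_{0t}^v(\textbf{x}_0)$. By the sparsity assumption the trajectory meets switching boundaries only at finitely many times $0<t_1<\cdots<t_N<t$, which partition $[0,t]$ into open intervals on each of which $v$ is smooth along the flow.

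On each smooth subinterval I would argue exactly as in the smooth LDDMM case: differentiating the flow equation $\partial_t\varphi_{0t}^{v+\epsilon h} = (v+\epsilon h)(t,\varphi_{0t}^{v+\epsilon h})$ in $\epsilon$ at $\epsilon=0$ yields the linear inhomogeneous variational equation $\dot\psi = Dv(t,\textbf{x}_t)\psi + h(t,\textbf{x}_t)$, whose homogeneous state-transition matrix is precisely the smooth-piece fundamental solution matrix of \cref{eq: initial one}. Variation of constants then produces the integral formula on that piece, and conversely a direct differentiation of the claimed integral (Leibniz rule, together with $D\varphi_{tt}^v=I$ and $\partial_t D\varphi_{ut}^v = Dv\cdot D\varphi_{ut}^v$) confirms that it solves this ODE with $\psi(0)=0$.

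The main work, and the principal obstacle, is the transfer of $\psi$ across each crossing, i.e. showing $\psi(t_k^+) = S_k\,\psi(t_k^-)$ with $S_k$ the saltation matrix of the preceding theorem. Here I would re-run the saltation derivation with the \emph{accumulated} variation $\epsilon\psi(t_k^-)+o(\epsilon)$ playing the role of the initial offset $\Delta\textbf{x}(t_k^-)$: the perturbed trajectory crosses $\Sigma(t)$ at a time displaced by $\delta = O(\epsilon)$, and one must check that both the mismatch in crossing times and the $O(\epsilon)$ perturbation of the driving field contribute to the positional jump only at order $O(\epsilon^2)$ (the field correction $\epsilon h$ acts over a window of length $\delta=O(\epsilon)$), so that they drop out in the $\epsilon\to0$ limit, while the contribution of $h$ over the vanishing window $[t_k^-,t_k^+]$ is likewise null. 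This is the delicate point, since it is precisely where differentiability could fail, and it implicitly relies on transversality (or, in the sliding case, on the alternative saltation matrix of the Remark).

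Granting the jump relation, the transition property \cref{eq: Transition property} together with $D\varphi_{t_k^-t_k^+}^v = S_k$ shows that the claimed integral reproduces the jump $\psi(t_k^+)=S_k\psi(t_k^-)$, the smooth-piece dynamics, and the initial condition simultaneously; indeed for $u<t_k$ one has $D\varphi_{ut_k^+}^v(\textbf{x}_u)=S_k D\varphi_{ut_k^-}^v(\textbf{x}_u)$, so the integral up to $t_k^+$ equals $S_k$ times the integral up to $t_k^-$. Uniqueness for this piecewise-linear initial value problem with prescribed jumps then identifies $\psi$ with the integral on all of $[0,t]$, and rewriting the integrand in $\text{Ad}$ form as above completes the proof.
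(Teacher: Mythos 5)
Your proposal is correct and follows the same overall strategy as the paper's proof: partition $[0,t]$ at the finitely many crossing times, invoke the smooth LDDMM result on each subinterval, and glue the pieces across each crossing with the saltation matrix, finally using the transition property of the fundamental solution matrix to collapse everything into a single integral. The differences are presentational and, in one place, substantive in your favor. Presentationally, you phrase the smooth pieces via the inhomogeneous variational equation $\dot\psi = Dv\,\psi + h$ and variation of constants (a Duhamel formula), whereas the paper differentiates the composition $\varphi_{0t}^v=\varphi_{t_1^+t}^v\circ\varphi_{0t_1^+}^v$ by the chain rule and then manipulates the two resulting integrals back into one; these are equivalent. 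Substantively, the paper simply \emph{asserts} the jump relation $\partial_v\varphi_{0t_1^+}^v h = S\,\partial_v\varphi_{0t_1^-}^v h$ without argument, while you correctly single this out as the crux and sketch why it holds: the accumulated variation $\epsilon\psi(t_1^-)+o(\epsilon)$ plays the role of the initial offset $\Delta\mathbf{x}(t_1^-)$ in the saltation derivation, and both the $O(\epsilon)$ crossing-time mismatch acting on the $O(\epsilon)$ field perturbation and the contribution of $h$ over the vanishing window $[t_1^-,t_1^+]$ enter only at order $O(\epsilon^2)$. That observation is exactly what is needed to make the paper's asserted step rigorous (modulo the transversality, or sliding-mode, hypothesis you flag), so your write-up is, if anything, more complete than the published proof at that point.
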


\begin{proof}
If the trajectory does not hit any switching boundaries, then we have a smooth case. According to \cite{younes_shapes_2019}, 
we obtain
\begin{equation*}
\partial_{\epsilon}\varphi_{0t}^{v+\epsilon h}|_{\epsilon = 0} = \partial_v\varphi_{0t}^v h =\int_0^t (D\varphi_{ut}^v  h(u))\circ \varphi_{0u}^v(\textbf{x}_0) du = \int_0^t (\text{Ad}_{\varphi_{ut}^v}h(u))\circ \varphi_{0t}^v(\textbf{x}_0) du.
\end{equation*}

The interesting case is when the flow $\varphi_{0t}^v(\textbf{x}_0)$ hits the switching boundary at multiple but finite time points or switching boundaries. 
Firstly we assume the flow hits the switching boundary only once at point $\textbf{x}_1$ in time $t_1$, then there is also a jump for the differentiation between time $t_1^-$ and $t_1^+$ expressed by the saltation matrix $S$:
\begin{displaymath}
\partial_v\varphi_{0t_1^+}^v(\textbf{x}_0) h=S\partial_v\varphi_{0t_1^-}^v(\textbf{x}_0) h.
\end{displaymath}
When $t>t_1$, by applying the chain rule, we can obtain
    \begin{align}
    \partial_v\varphi_{0t}^v(\textbf{x}_0) h 
    &=D\varphi_{t_1^+t}^v(\textbf{x}_1)  \partial_v\varphi_{0t_1^+}^v(\textbf{x}_0) h+\partial_v\varphi_{t_1^+t}^v(\textbf{x}_1)   h \nonumber \\ 
    &=D\varphi_{t_1^+t}^v(\textbf{x}_1)  S \partial_v\varphi_{0t_1^-}^v(\textbf{x}_0) h+ \partial_v\varphi_{t_1^+t}^v(\textbf{x}_1)   h .
    \label{eq: jump differentiation}
    \end{align}
Despite exhibiting a jump in the differentiation in \cref{eq: jump differentiation}, we can rewrite it in a uniform expression as the smooth case.
For flow $\varphi_{0t_1^-}^v(\textbf{x}_0)$, it satisfies 
    \begin{equation*}
         \partial_v\varphi_{0t_1^-}^v(\textbf{x}_0)  h = \int_0^{t_1^-} (\text{Ad}_{\varphi_{ut_1^-}^v}h(u))\circ \varphi_{0t_1^-}^v(\textbf{x}_0) du.
    \end{equation*}
For flow $\varphi_{t_1^+t}^v(\textbf{x}_1)$, it satisfies     
    \begin{equation*}
        \partial_v\varphi_{t_1^+t}^v(\textbf{x}_1)  h = \int_{t_1^+}^t (\text{Ad}_{\varphi_{ut}^v}h(u))\circ \varphi_{t_1^+t}^v(\textbf{x}_1) du,
    \end{equation*}
so for the whole flow, we have 
    \begin{align*}
    \partial_v\varphi_{0t}^v(\textbf{x}_0) h 
    =& D\varphi_{t_1^+t}^v(\textbf{x}_1) S \int_0^{t_1^-} (\text{Ad}_{\varphi_{ut_1^-}^v}h(u))\circ \varphi_{0t_1^-}^v(\textbf{x}_0) du 
    +\int_{t_1^+}^t (\text{Ad}_{\varphi_{ut}^v}h(u))\circ \varphi_{t_1^+t}^v(\textbf{x}_1) du\\ 
    =& \int_0^{t_1^-} (D\varphi_{t_1^+t}^v D\varphi_{t_1^-t_1^+}^v \text{Ad}_{\varphi_{ut_1^-}^v}h(u))\circ \varphi_{0t_1^-}^v(\textbf{x}_0) du 
    +\int_{t_1^+}^t (\text{Ad}_{\varphi_{ut}^v}h(u))\circ \varphi_{t_1^+t}^v(\textbf{x}_1) du\\ 
    =& \int_0^{t_1^-} (D\varphi_{t_1^+t}^v  D\varphi_{t_1^-t_1^+}^v \text{Ad}_{\varphi_{ut_1^-}^v}h(u))\circ (\varphi_{t_1^-t}^v)^{-1} \circ \varphi_{0t}^v(\textbf{x}_0) du \\
    &+\int_{t_1^+}^t (\text{Ad}_{\varphi_{ut}^v}h(u))\circ \varphi_{0t}^v(\textbf{x}_0) du\\
    =& \Big(\int_0^{t_1^-} (D\varphi_{t_1^+t}^v  D\varphi_{t_1^-t_1^+}^v  \text{Ad}_{\varphi_{ut_1^-}^v}h(u))\circ (\varphi_{t_1^-t}^v)^{-1} du 
    + \int_{t_1^+}^t \text{Ad}_{\varphi_{ut}^v}h(u) du \Big)\circ \varphi_{0t}^v(\textbf{x}_0) \\ 
    =& \Big(\int_{0}^{t_1^-} \text{Ad}_{\varphi_{ut}^v}h(u) du + \int_{t_1^+}^t \text{Ad}_{\varphi_{ut}^v}h(u) du \Big)\circ \varphi_{0t}^v(\textbf{x}_0)\\ 
    =& \Big(\int_{0}^{t} \text{Ad}_{\varphi_{ut}^v}h(u) du\Big) \circ \varphi_{0t}^v(\textbf{x}_0).
    \end{align*}   
For multiple but finite time points or switching boundaries, we can get an analogous uniform expression of $\partial_v\varphi_{0t}^v(\textbf{x}_0)h$. 
\end{proof}

\begin{corollary}
The evolution equations \cref{eq: first-order velocity evolution,eq: first-order momenta evolution} also apply to the non-smooth case. 
\end{corollary}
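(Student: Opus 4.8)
The plan is to observe that the corollary is an immediate consequence of the theorem just proved, because the derivation of \cref{eq: first-order velocity evolution,eq: first-order momenta evolution} in \cref{subsec: Euler-Lagrange equations} never used smoothness of the velocity field directly: it invoked the flow only through the single identity $\frac{d}{d\epsilon}\varphi_{01}^{v+\epsilon h}|_{\epsilon=0} = \int_0^1(\text{Ad}_{\varphi_{t1}^v}h(t))\circ\varphi_{01}^v\,dt$, together with the purely algebraic properties of the operators $\text{Ad}_\varphi$, $\text{Ad}_\varphi^{\ast}$, $\text{Ad}_\varphi^{T}$, $\text{ad}_v$ and $\text{ad}_v^{\ast}$. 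Since the preceding theorem establishes exactly this identity with the adjoint now defined through the fundamental solution matrix (saltation matrices included), the entire variational computation can be rerun verbatim.

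Concretely, I would carry out the following steps. First, form $\frac{d}{d\epsilon}E(\varphi_{01}^{v+\epsilon h})|_{\epsilon=0}$ for the energy \cref{eq: HOLDDMM energy}, splitting it into the similarity part and the regularizer part exactly as before. Second, for the similarity part I would substitute the non-smooth flow-derivative from the theorem, decompose the variation as $h=\sum_{i=0}^d h_i$, pass the Eulerian differential $\overline{\partial}E_S(\varphi_{01}^v)$ through the time integral, and transpose the adjoint to reach $\nabla E_S(v)(t)=\text{Ad}_{\varphi_{t1}^v}^{T}\overline{\nabla}^V E_S(\varphi_{01}^v)$; setting the total gradient to zero then yields the optimality condition \cref{eq: first-order velocity evolution} and, as in the smooth case, $v(t)=\text{Ad}_{\varphi_{t0}^v}^{T}v(0)$ together with the analogous identity for each $v_i$. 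Third, pairing the total momentum against an arbitrary test field $w$ and using $(m(t)|w)=(m(0)|\text{Ad}_{\varphi_{t0}^v}w)$, I would differentiate in $t$ via $\partial_t\text{Ad}_{\varphi_{t0}^v}w=-\text{Ad}_{\varphi_{t0}^v}\text{ad}_{v(t)}w$ and read off \cref{eq: first-order momenta evolution}, and likewise for each $m_i$.

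The main obstacle is the time differentiation in the third step. Across a switching time $t_1$ the fundamental solution matrix, and hence $t\mapsto\text{Ad}_{\varphi_{t0}^v}w$, jumps by the saltation matrix $S=D\varphi_{t_1^-t_1^+}^v$, so the map is only piecewise smooth and $\partial_t\text{Ad}$ is not defined in the classical sense at the finite set of switching times. I would handle this by proving the coadjoint relation $\partial_t\text{Ad}_{\varphi_{t0}^v}w=-\text{Ad}_{\varphi_{t0}^v}\text{ad}_{v(t)}w$ on each open subinterval between consecutive switching times, where the flow is smooth, and then invoking the transition property together with the fact that the saltation matrix is itself the fundamental solution matrix $D\varphi_{t_1^-t_1^+}^v$ to match the one-sided limits across $t_1$. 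This shows that $\text{Ad}_{\varphi_{t0}^v}$ is precisely the continuous object built from the uniform expression proved in the theorem, so no spurious singular term survives and the evolution equation holds in the almost-everywhere/piecewise sense asserted. The remaining checks---that $\text{Ad}_\varphi^{\ast}$, $\text{Ad}_\varphi^{T}$ and the transpose identity $\langle\text{Ad}_\varphi^{T}v,w\rangle_V=(Lv|\text{Ad}_\varphi w)$ retain their meaning---are immediate, since all of these are defined purely from $D\varphi$ and the kernel $K$, both of which are already available in the non-smooth setting.
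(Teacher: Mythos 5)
Your proposal is correct and follows essentially the same route as the paper, whose proof is a one-line appeal to the fact that the preceding theorem gives $\partial_v\varphi_{0t}^v h$ and the adjoint operator the same uniform expression as in the smooth case, so the variational derivation can be rerun verbatim. You go beyond the paper by explicitly confronting the piecewise differentiability of $t\mapsto\text{Ad}_{\varphi_{t0}^v}w$ across switching times (arguing on each open subinterval and matching one-sided limits via the saltation matrix), a point the paper's proof silently glosses over; this is a refinement of the same argument rather than a different one.
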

\begin{proof}
 The well-defined and uniform expression of the adjoint operator and differentiation with respect to the discontinuous vector field, which we obtained based on the saltation matrix and fundamental solution matrix, directly implies the result.
\end{proof}
As a result of the corollary, we can use the evolution equations to guide the motion of the trajectory generated by first-order momenta based on non-differentiable functions.

\section{Experiments}
\label{sec:Experiment}
We present experimental results for our proposed method for handling sliding motion using zeroth- and first-order momenta.
We show results for both 2D synthetic images and 3D lung volumes. Our 
implementation is available at \url{https://github.com/baolily/hokreg}, which is based on the $mermaid$ library (\url{https://github.com/uncbiag/mermaid}) that contains various image registration methods. 

To demonstrate the effectiveness of our proposed multiplicative Wendland kernel method, we include results obtained using LDDMM with Gaussian kernel as a contrast algorithm. Furthermore, to validate the advantage of the proposed first-order momentum strategy, we also compare our proposed method with a registration method that only uses zeroth-order momentum based on the multiplicative Wendland kernel. To ensure fair comparisons, 
we set the kernel size to 9 for different kernels and fix it for all experiments. 

\subsection{Synthetic Images}
\label{subsec: Synthetic Images}
The first evaluation was performed on a 2D synthetic rectangle image, where the original template image was warped by a sliding motion: the upper region was moved to the right while the lower region was moved to the left by 5 pixels, as shown in \cref{fig: rectangle template,fig: rectangle reference}.  
\begin{figure}[htbp]
\centering
\subfigure[]{
\includegraphics[height = 2.6cm]{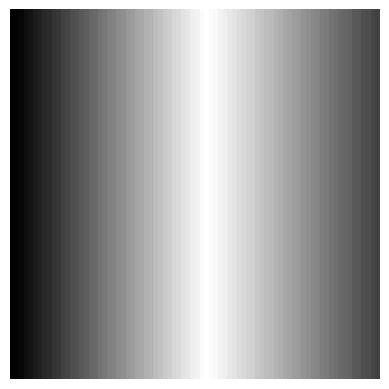}
\label{fig: rectangle template}}
\subfigure[]{
\includegraphics[height = 2.6cm]{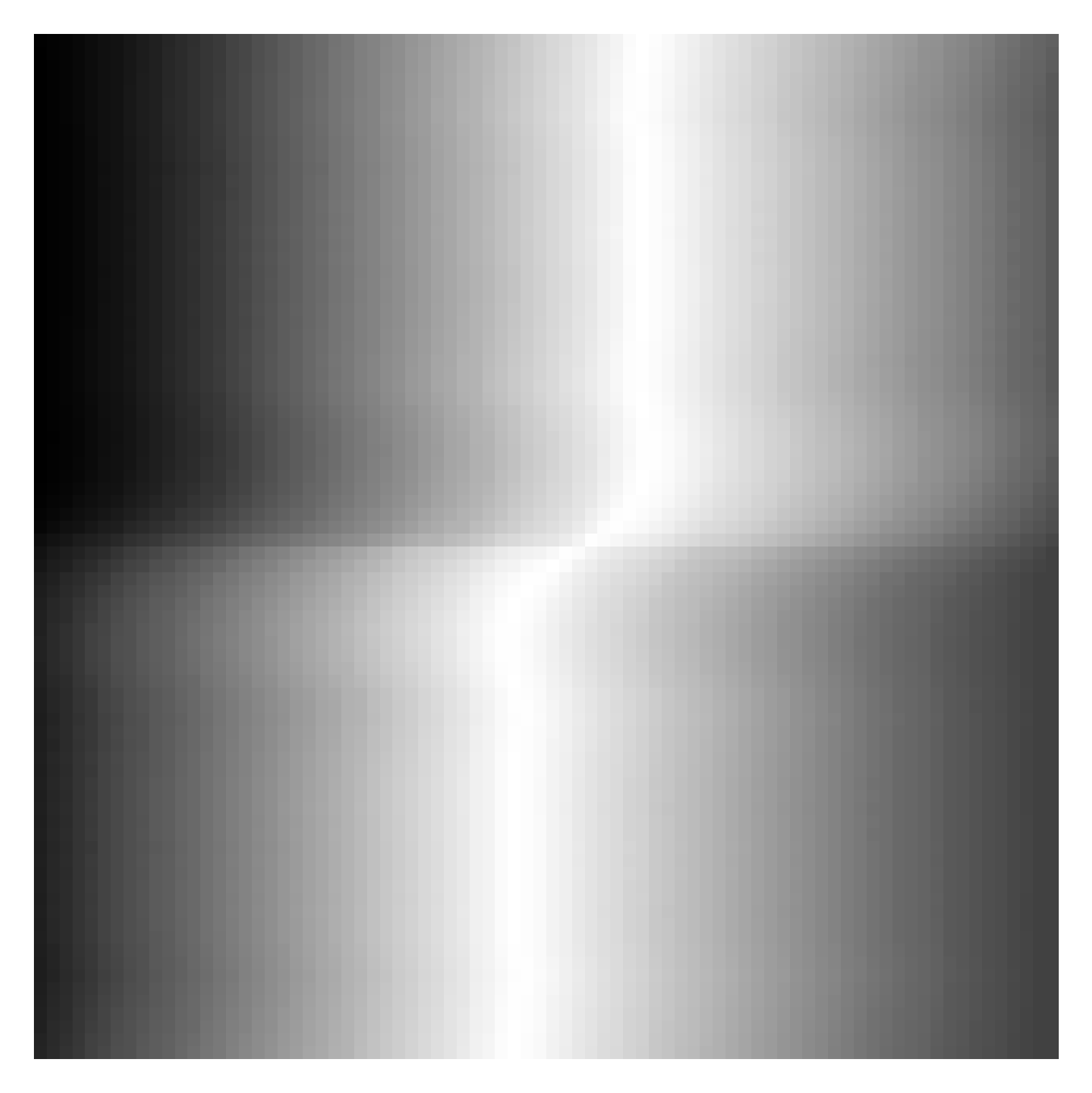}
\label{fig: rectangle deformed gaussian}}
\subfigure[]{
\includegraphics[height = 2.6cm]{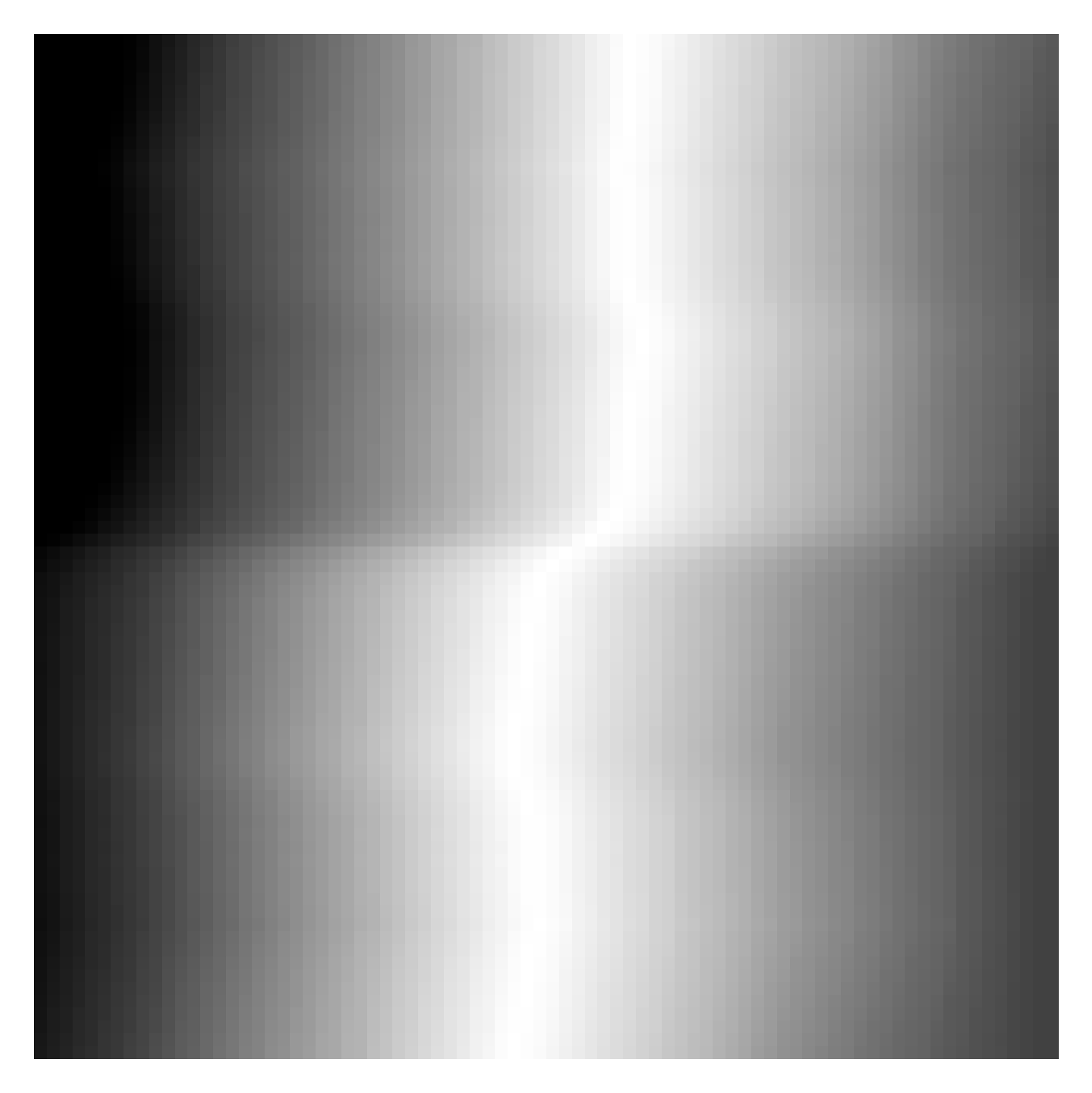}
\label{fig: rectangle deformed zeroth-order}}
\subfigure[]{
\includegraphics[height = 2.6cm]{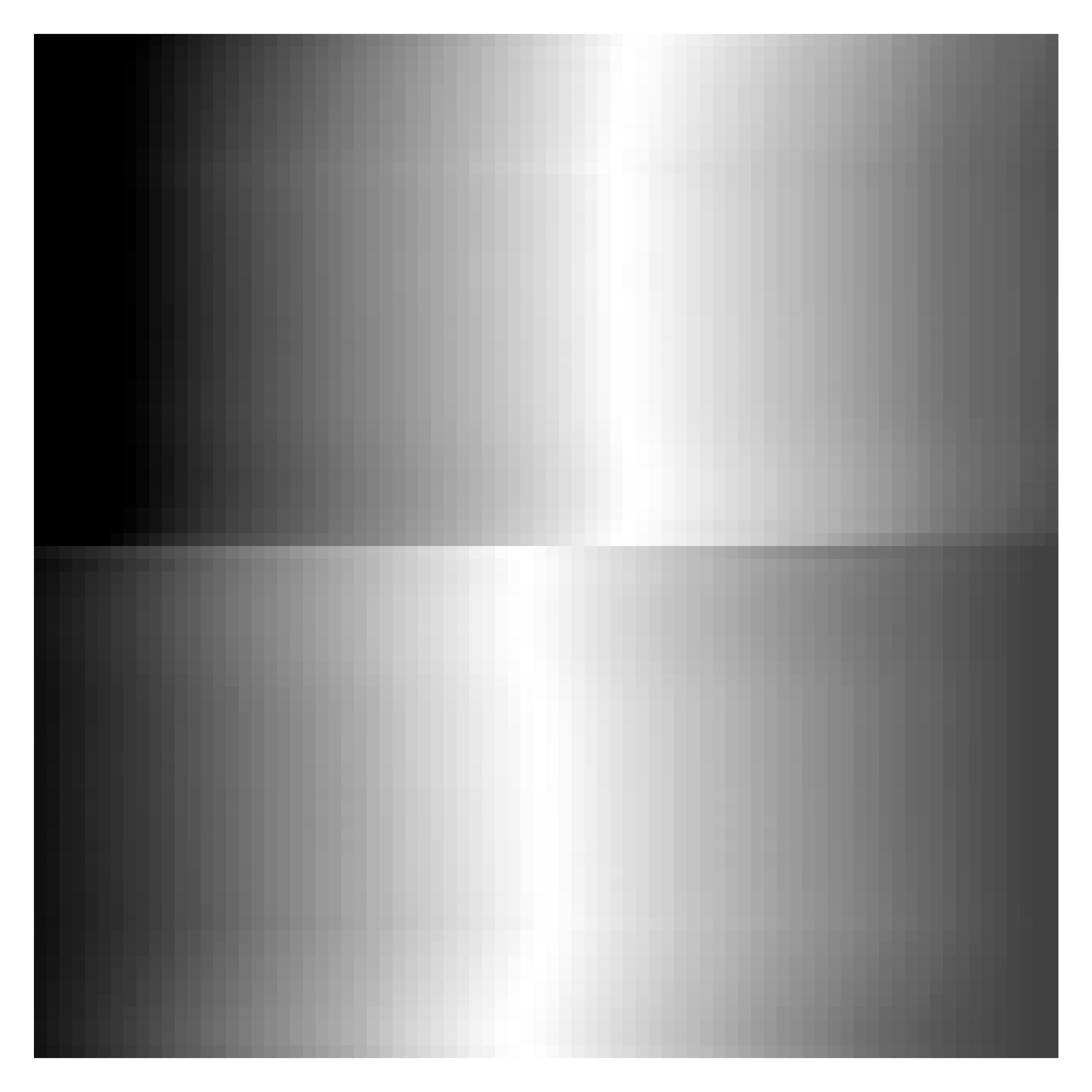}
\label{fig: rectangle deformed zeroth and first order}}

\subfigure[]{
\includegraphics[height = 2.6cm]{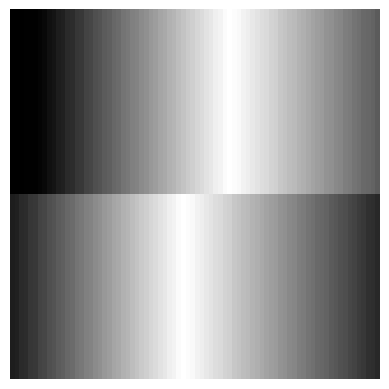}
\label{fig: rectangle reference}}
\subfigure[]{
\includegraphics[height = 2.6cm]{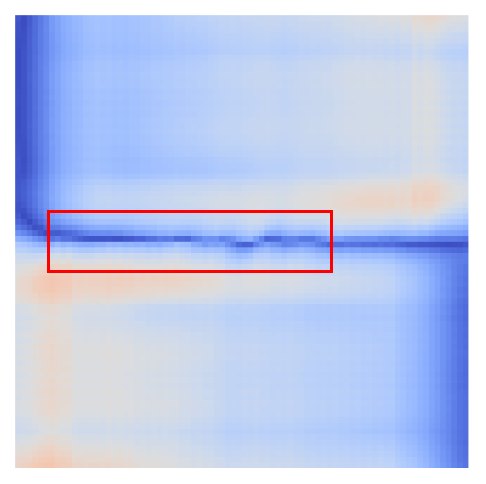}
\label{fig: rectangle deformed mag gaussian}}
\subfigure[]{
\includegraphics[height = 2.6cm]{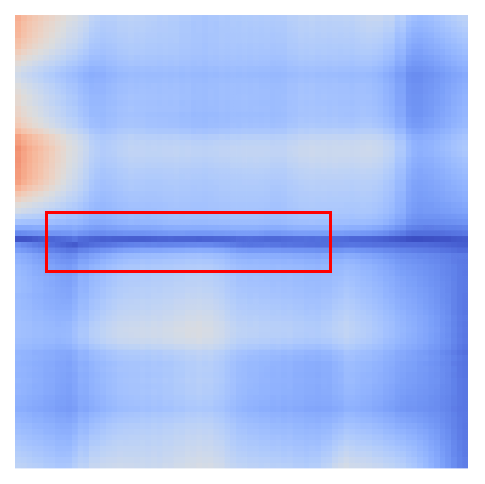}
\label{fig: rectangle deformed mag zeroth-order}}
\subfigure[]{
\includegraphics[height = 2.6cm]{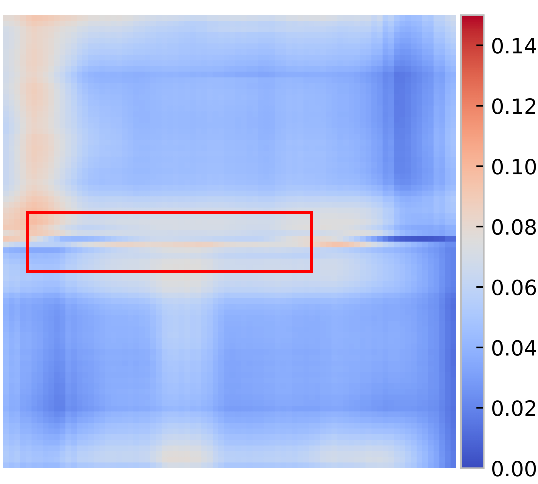}
\label{fig: rectangle deformed mag zeroth and first order}}
\caption{\label{fig: rectangle result}Registration results on synthetic rectangle images with sliding motion. \subref{fig: rectangle template} Template image; \subref{fig: rectangle deformed gaussian} Deformed template image using Gaussian kernel; \subref{fig: rectangle deformed zeroth-order} Deformed template image using zeroth-order momentum based on Wendland kernel; \subref{fig: rectangle deformed zeroth and first order} Deformed template image using both zeroth- and first-order momenta based on Wendland kernel; \subref{fig: rectangle reference} Reference image; \subref{fig: rectangle deformed mag gaussian} Deformation field magnitudes obtained by using Gaussian kernel; \subref{fig: rectangle deformed mag zeroth-order} Deformation field magnitudes obtained by using zeroth-order momentum based on Wendland kernel; \subref{fig: rectangle deformed mag zeroth and first order} Deformation field magnitudes obtained by using both zeroth- and first-order momenta based on Wendland kernel.}
\end{figure} 

The qualitative results are shown in \cref{fig: rectangle result}. The standard LDDMM registration with Gaussian kernel causes more smoothing deformation around the sliding interfaces (\cref{fig: rectangle deformed gaussian}), and the corresponding deformation field magnitude (\cref{fig: rectangle deformed mag gaussian}) also exhibits smoother deformation, as indicated by the red box. Using only the zeroth-order momentum method results in a blurred sliding interface (\cref{fig: rectangle deformed zeroth-order,fig: rectangle deformed mag zeroth-order}). However, when both zeroth- and first-order momenta are used, the boundary is better preserved, and the deformation field is much more realistic (\cref{fig: rectangle deformed zeroth and first order,fig: rectangle deformed mag zeroth and first order}).

\begin{figure}[htbp]
\centering
\subfigure[]{
\includegraphics[height = 2.6cm]{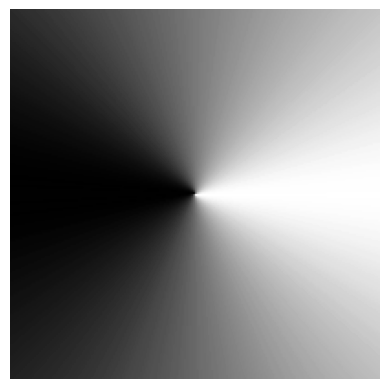}
\label{fig: wheel template}}
\subfigure[]{
\includegraphics[height = 2.6cm]{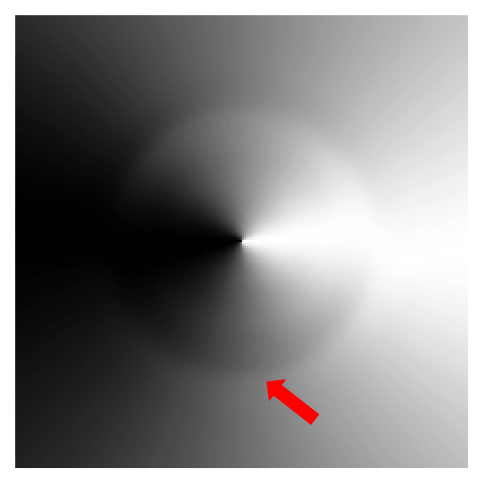}
\label{fig: wheel deformed gaussian}}
\subfigure[]{
\includegraphics[height = 2.6cm]{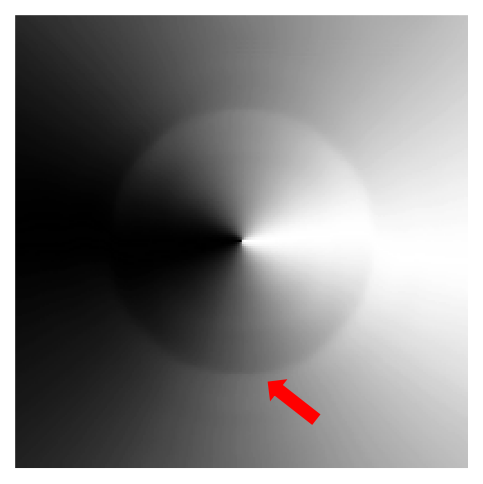}
\label{fig: wheel deformed zeroth-order}}
\subfigure[]{
\includegraphics[height = 2.6cm]{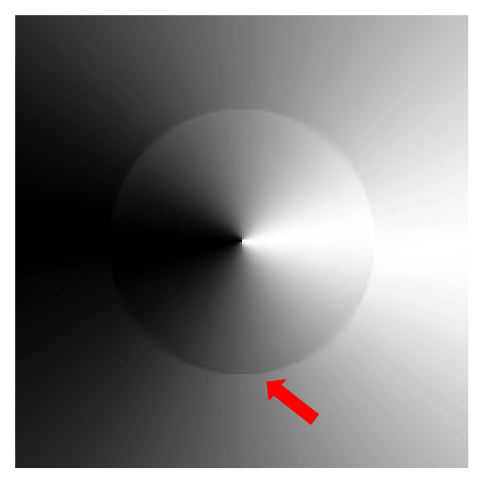}
\label{fig: wheel deformed zeroth and first order}}

\subfigure[]{
\includegraphics[height = 2.6cm]{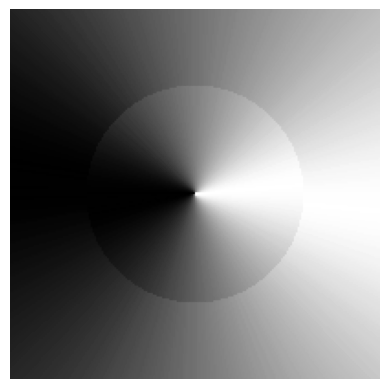}
\label{fig: wheel reference}}
\subfigure[]{
\includegraphics[height = 2.6cm]{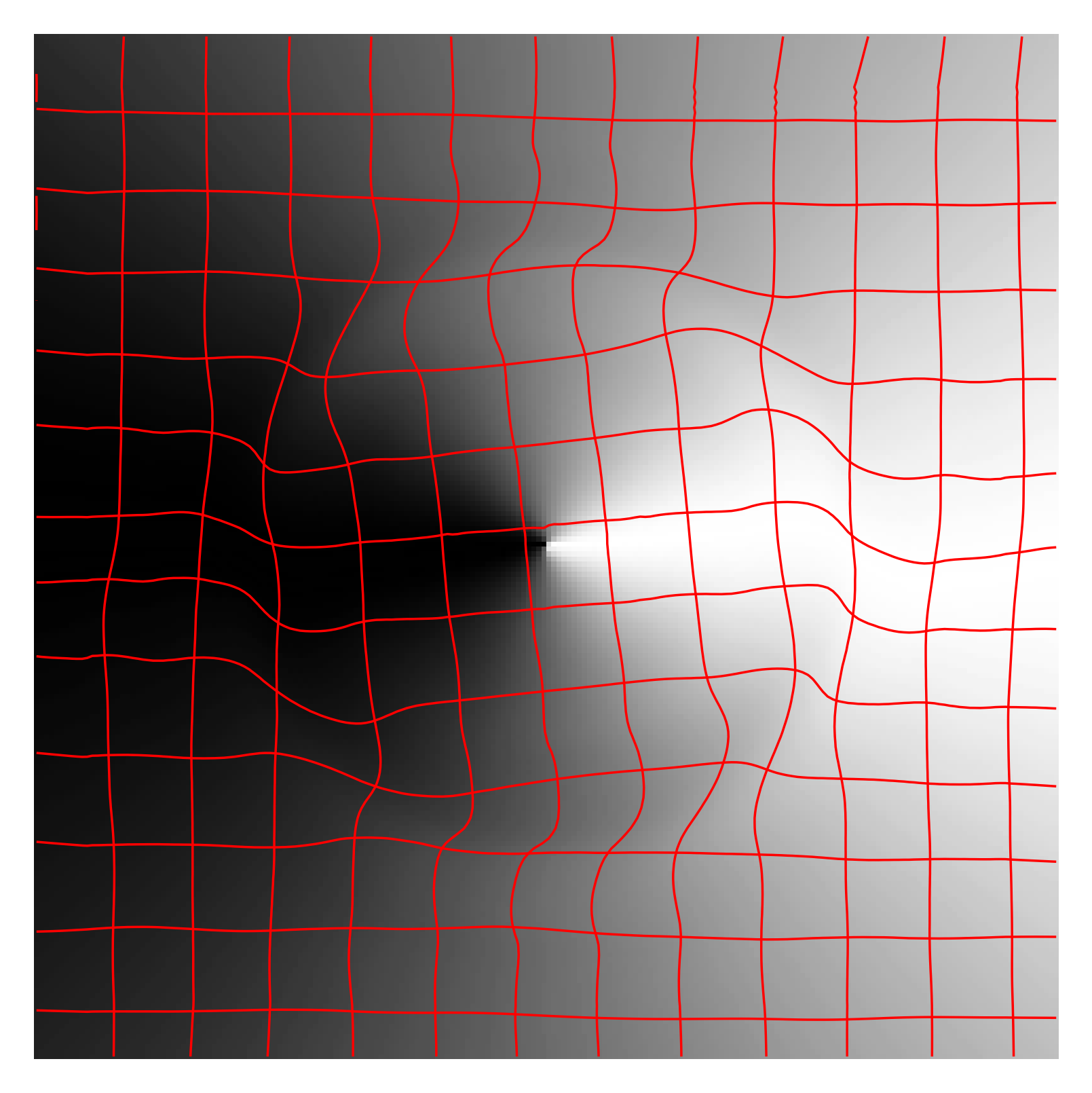}
\label{fig: wheel deformed grid gaussian}}
\subfigure[]{
\includegraphics[height = 2.6cm]{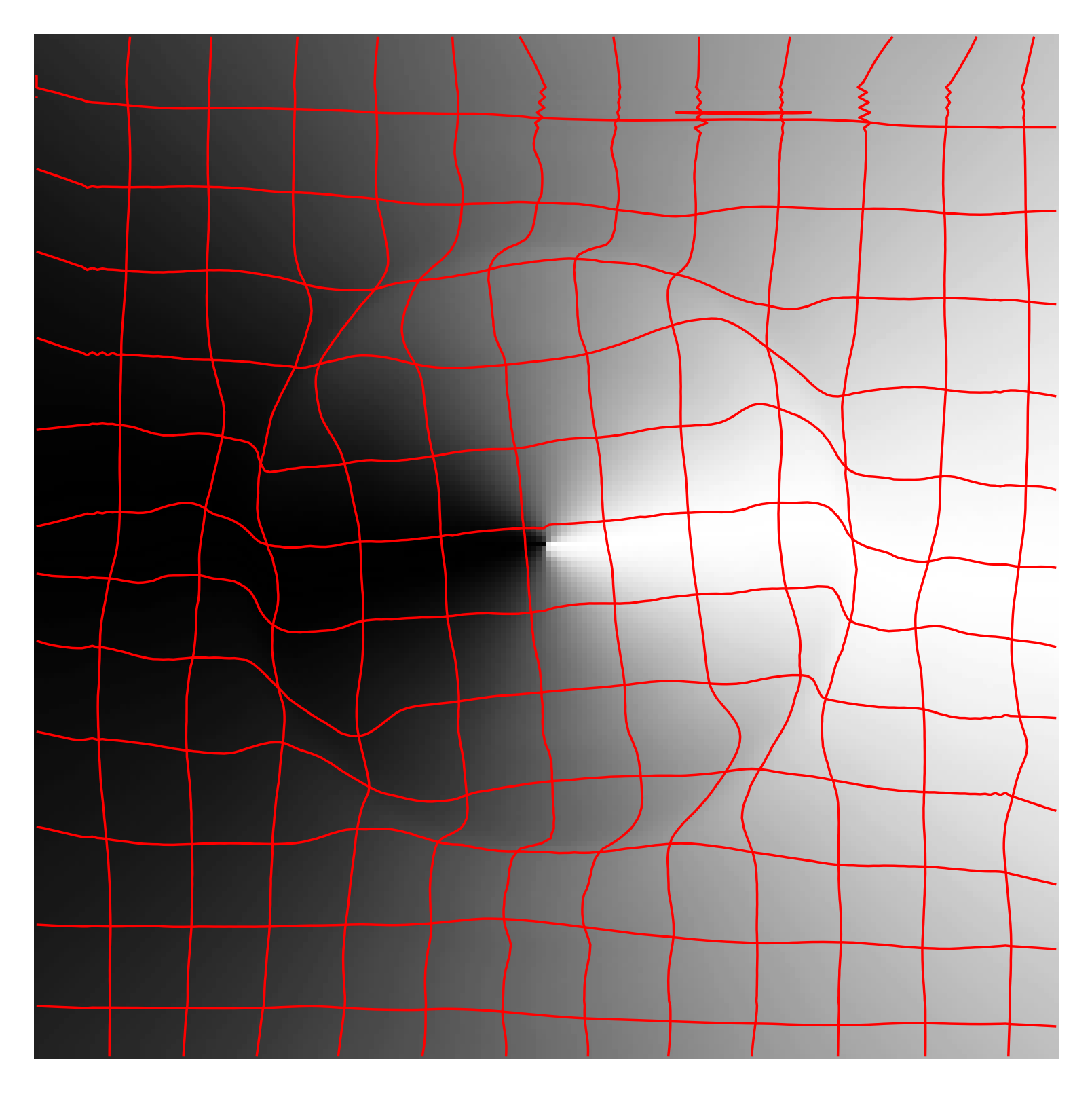}
\label{fig: wheel deformed grid zeroth-order}}
\subfigure[]{
\includegraphics[height = 2.6cm]{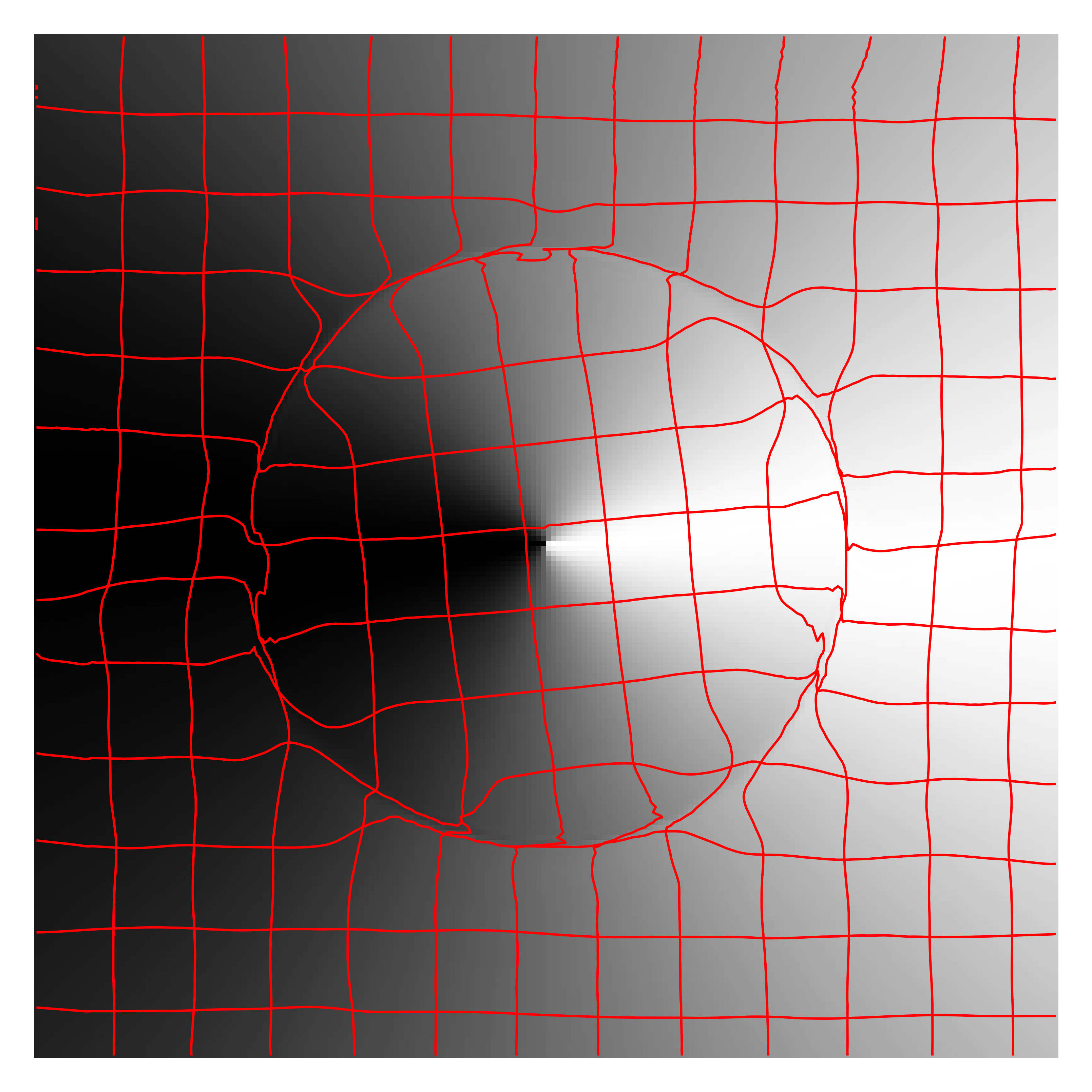}
\label{fig: wheel deformed grid zeroth and first order}}
\caption{\label{fig: wheel result}Registration results on synthetic wheel images with sliding motion. \subref{fig: wheel template} Template image; \subref{fig: wheel deformed gaussian} Deformed template image using Gaussian kernel; \subref{fig: wheel deformed zeroth-order} Deformed template image using zeroth-order momentum based on Wendland kernel; \subref{fig: wheel deformed zeroth and first order} Deformed template image using both zeroth- and first-order momenta based on Wendland kernel; \subref{fig: wheel reference} Reference image; \subref{fig: wheel deformed grid gaussian} Deformed grid obtained by using Gaussian kernel; \subref{fig: wheel deformed grid zeroth-order} Deformed grid obtained by using zeroth-order momentum based on Wendland kernel; \subref{fig: wheel deformed grid zeroth and first order} Deformed grid obtained by using both zeroth- and first-order momenta based on Wendland kernel.}
\end{figure}

In the second experiment, we tested the method on a more complex sliding motion. The template image was deformed by rotating the inner and outer circular regions by 5 degrees in opposite directions, as shown in \cref{fig: wheel template,fig: wheel reference}. When using the standard LDDMM with Gaussian kernel, the inner and outer parts are aligned smoothly (\cref{fig: wheel deformed gaussian}), and the corresponding deformed grid (\cref{fig: wheel deformed grid gaussian}) also exhibit this. 
while the multiplicative Wendland kernel based method with only zeroth-order momentum improved registration accuracy (\cref{fig: wheel deformed zeroth-order,fig: wheel deformed grid zeroth and first order}), it still failed to preserve the boundary and deal with sliding motion.
Benefiting from the derivative kernel and first-order momentum, our proposed method could capture the sliding motion near the boundary and provide the best result, as indicated by the red arrow (\cref{fig: wheel deformed zeroth and first order,fig: wheel deformed grid zeroth and first order}).

\subsection{DIR-Lab dataset}
\label{subsec: DIR-Lab dataset}
\begin{table}[!htbp]
	\centering
	\caption{Comparison of TRE (target registration error) on the DIRLAB dataset. Bold values are the best values (the lowest error).}
	\label{eq: TRE}
	\begin{tabular}{lccccc}
		\toprule
		& & \multicolumn{3}{c}{After registration}  \\
		\cmidrule(r){3-5} 
		Case &Before registration  & Gaussian   & Wendland  & proposed \\
		\midrule
		4DCT01   &3.89     &0.84    &0.83    &\textbf{0.77}  \\
		4DCT02   &4.34     &0.81    &\textbf{0.78}    &0.79  \\
		4DCT03   &6.94     &\textbf{1.32}   &1.35   &\textbf{1.32} \\
		4DCT04   &9.83     &\textbf{1.64}    &\textbf{1.64}    &1.77  \\
		4DCT05   &7.48     &2.17    &2.20   &\textbf{2.15}  \\
		4DCT06   &10.89    &2.39    &\textbf{2.36}    &2.66  \\
		4DCT07   &11.03    &3.43    &\textbf{3.40}    &3.42  \\
		4DCT08   &14.99    &\textbf{8.54}    &8.55    &8.86  \\
		4DCT09   &7.92     &2.10    &2.09    &\textbf{1.47}  \\
		4DCT10   &7.30     &2.14    &2.15    &\textbf{1.71}  \\
		\hline
		Mean   &8.46     &2.54  &2.54  &\textbf{2.49} \\
		\bottomrule
	\end{tabular}
\end{table}
In this section, we evaluate the performance of our proposed algorithm on the publicly available 4D Lung CT DIRLAB dataset (\url{http://www.dir-lab.com/}). The dataset contains 10 4D-CT cases, each consisting of 10 3D-CT breathing sequences with manually 300 annotated landmarks.
The spatial resolution ranges from $0.97 \times 0.97 \times 2.5 ~mm^3$ to $1.16 \times 1.16 \times 2.5 ~mm^3$. In our experiments, we cropped the images to contain the thoracic cavity and clipped the image intensities between 50 and 1200 HU. We used the extreme inhale images as the reference images and the extreme exhale images as the template images. 

\begin{figure}[!htbp]
\centering
\subfigure[]{
\includegraphics[height = 2.6cm]{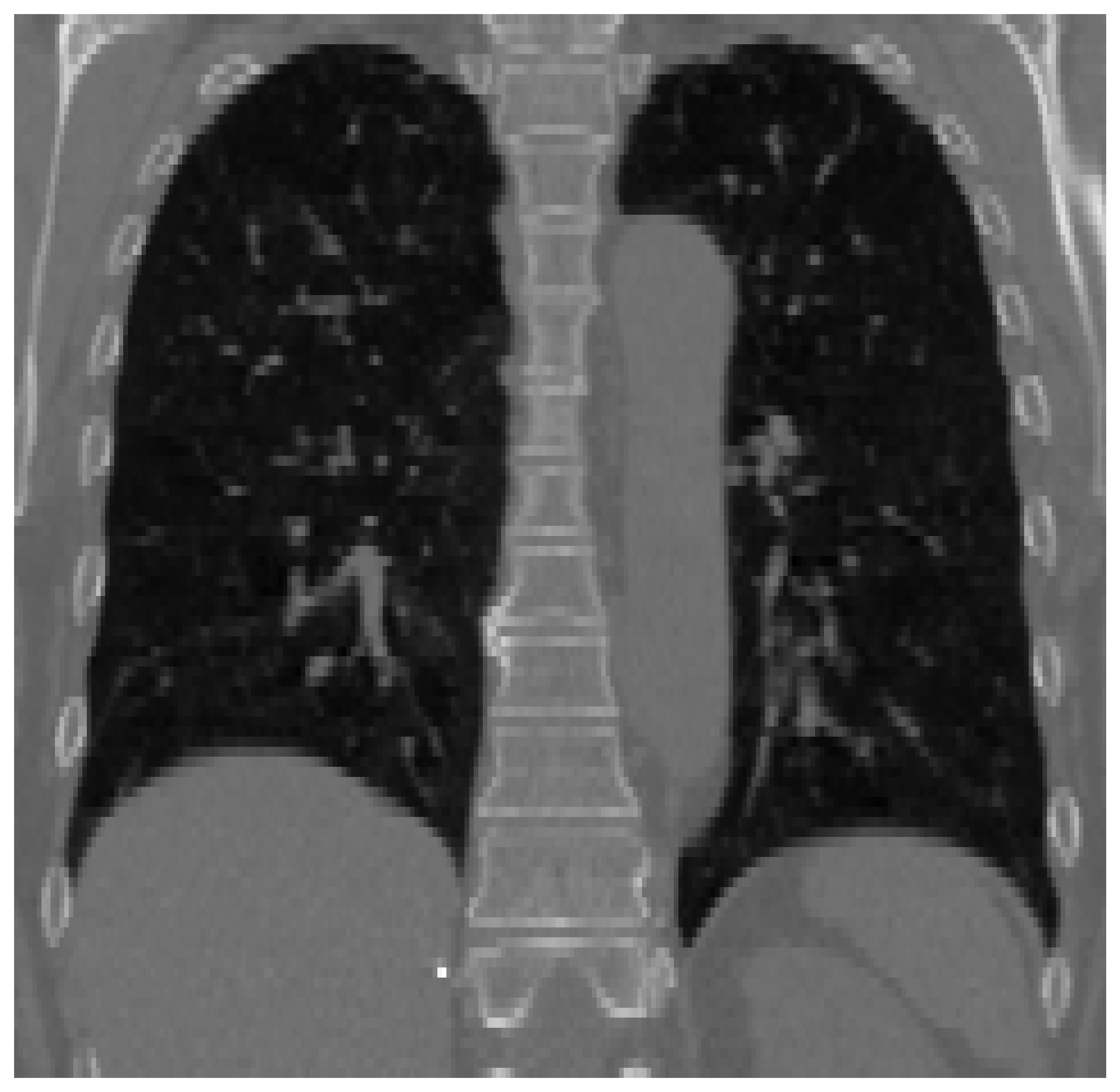}
\label{fig: lung template}}
\subfigure[]{
\includegraphics[height = 2.6cm]{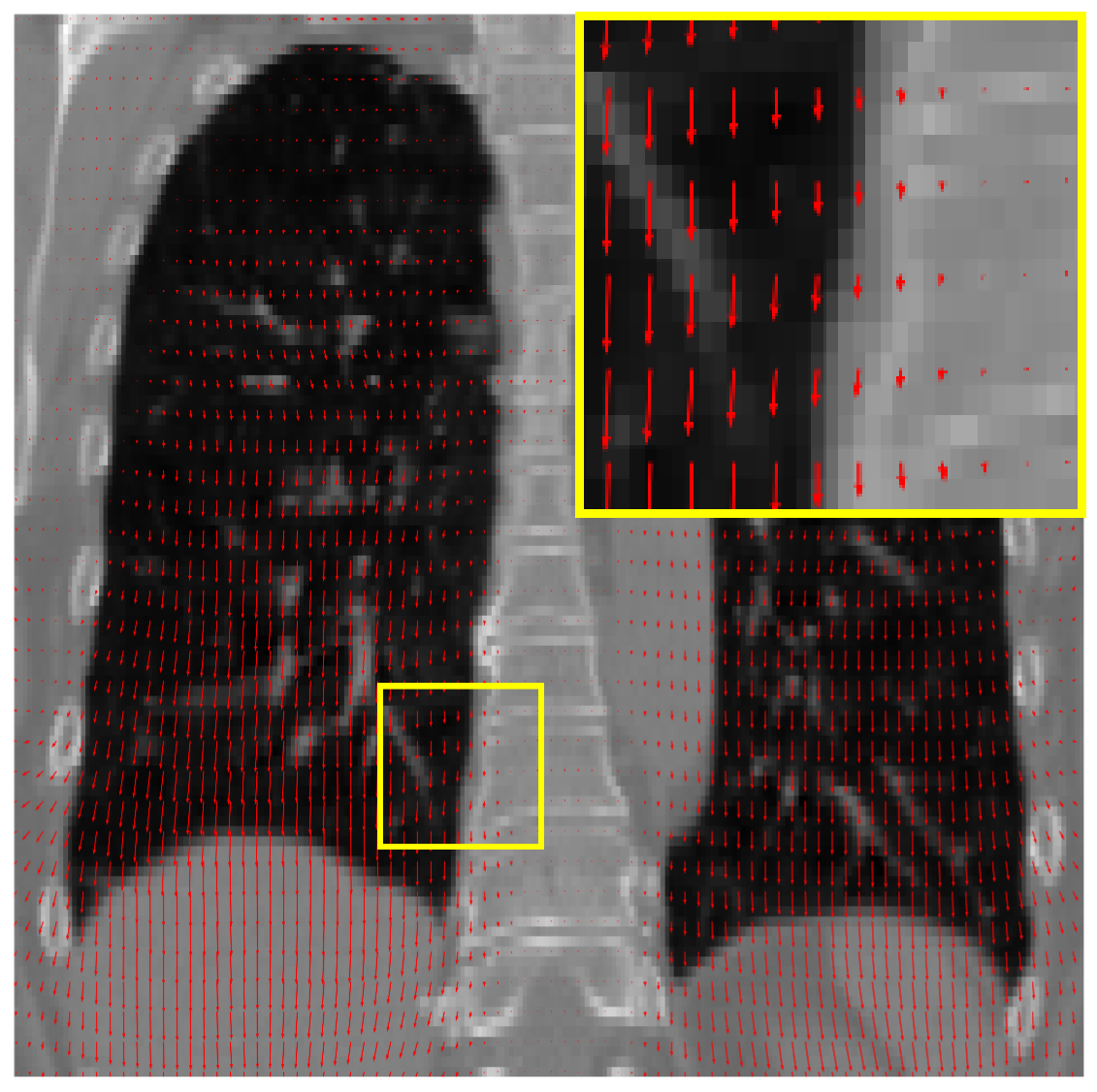}
\label{fig: lung deformed gaussian}}
\subfigure[]{
\includegraphics[height = 2.6cm]{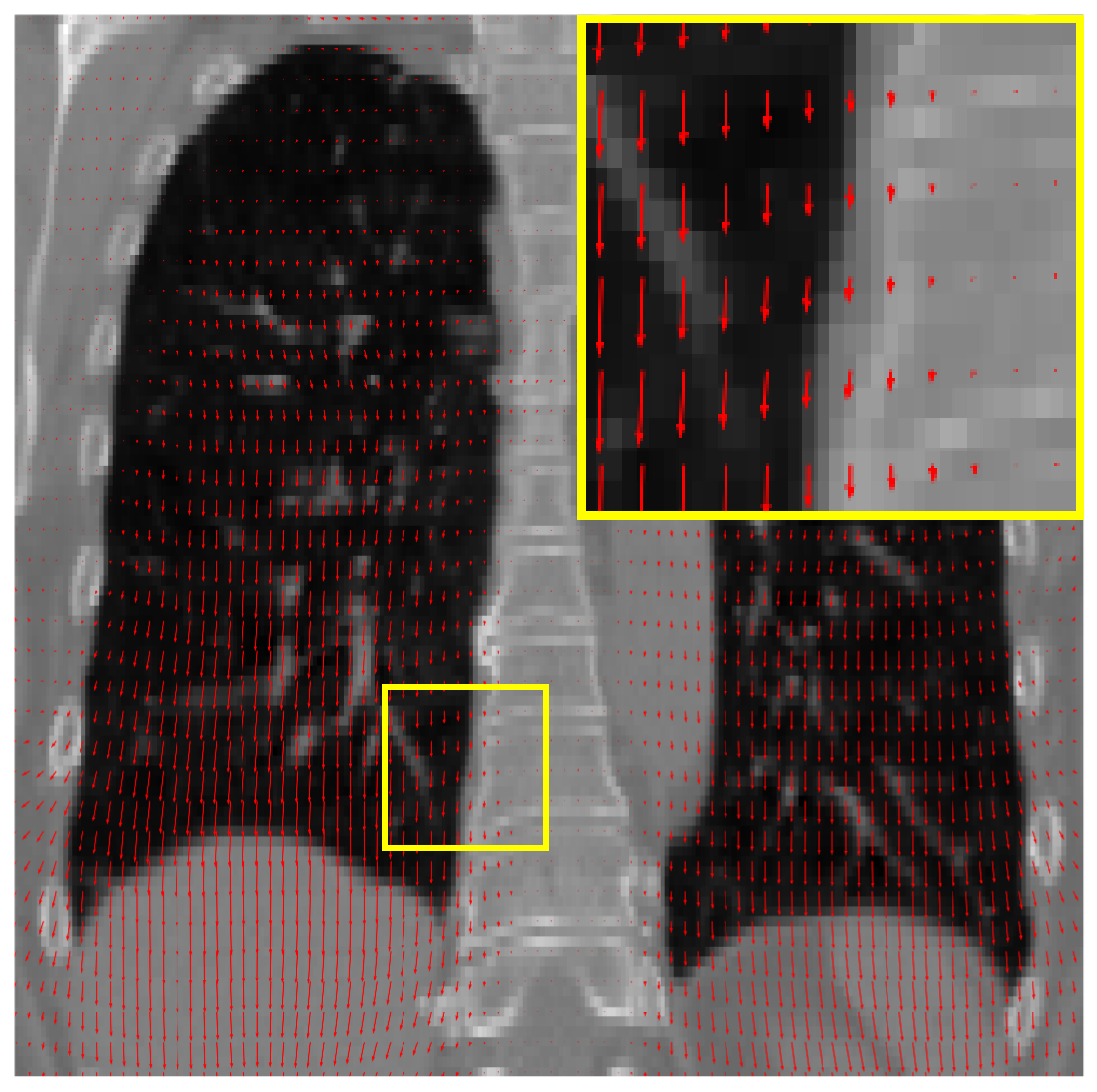}
\label{fig: lung deformed zeroth-order}}
\subfigure[]{
\includegraphics[height = 2.6cm]{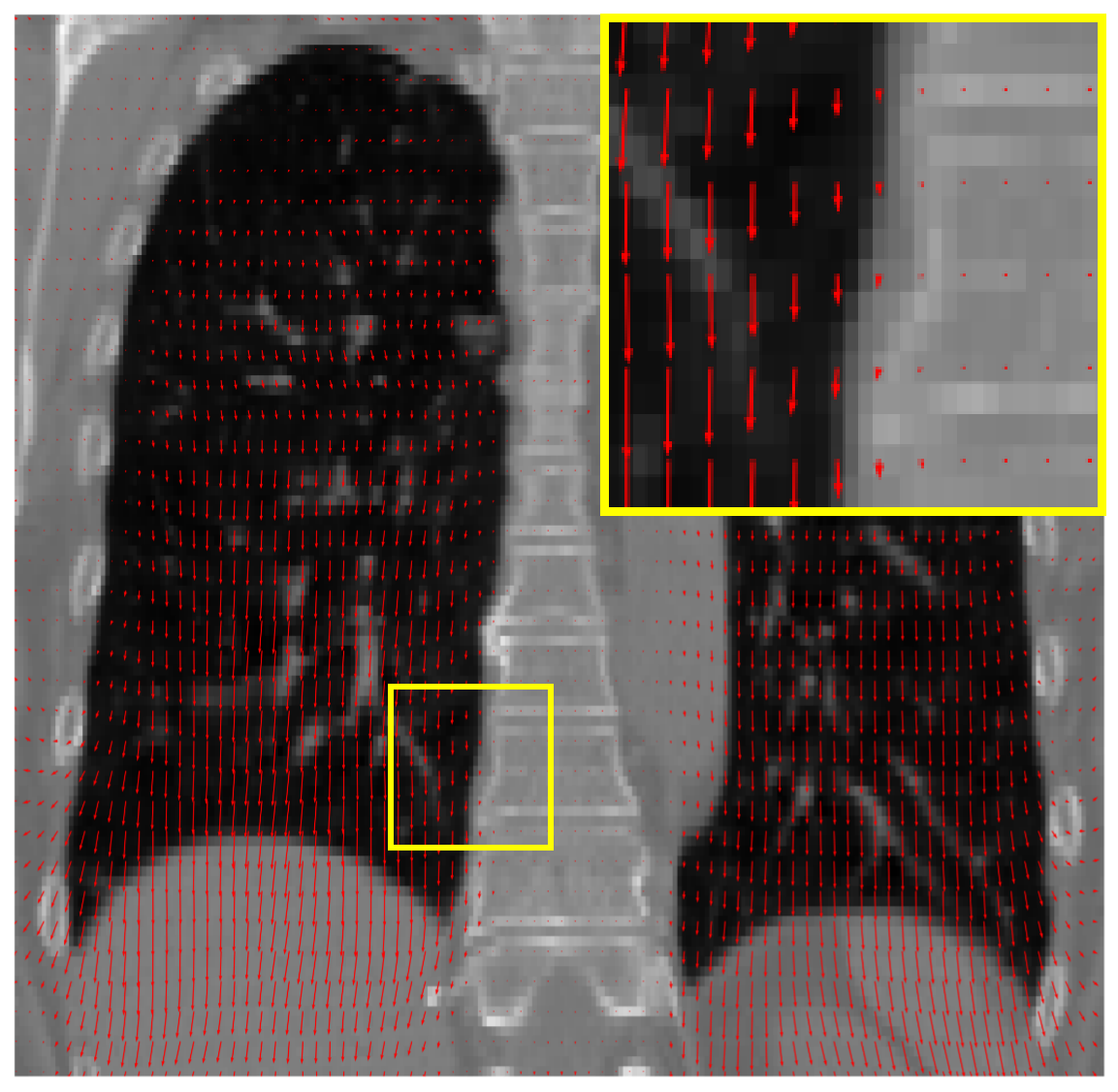}
\label{fig: lung deformed zeroth and first order}}

\subfigure[]{
\includegraphics[height = 2.6cm]{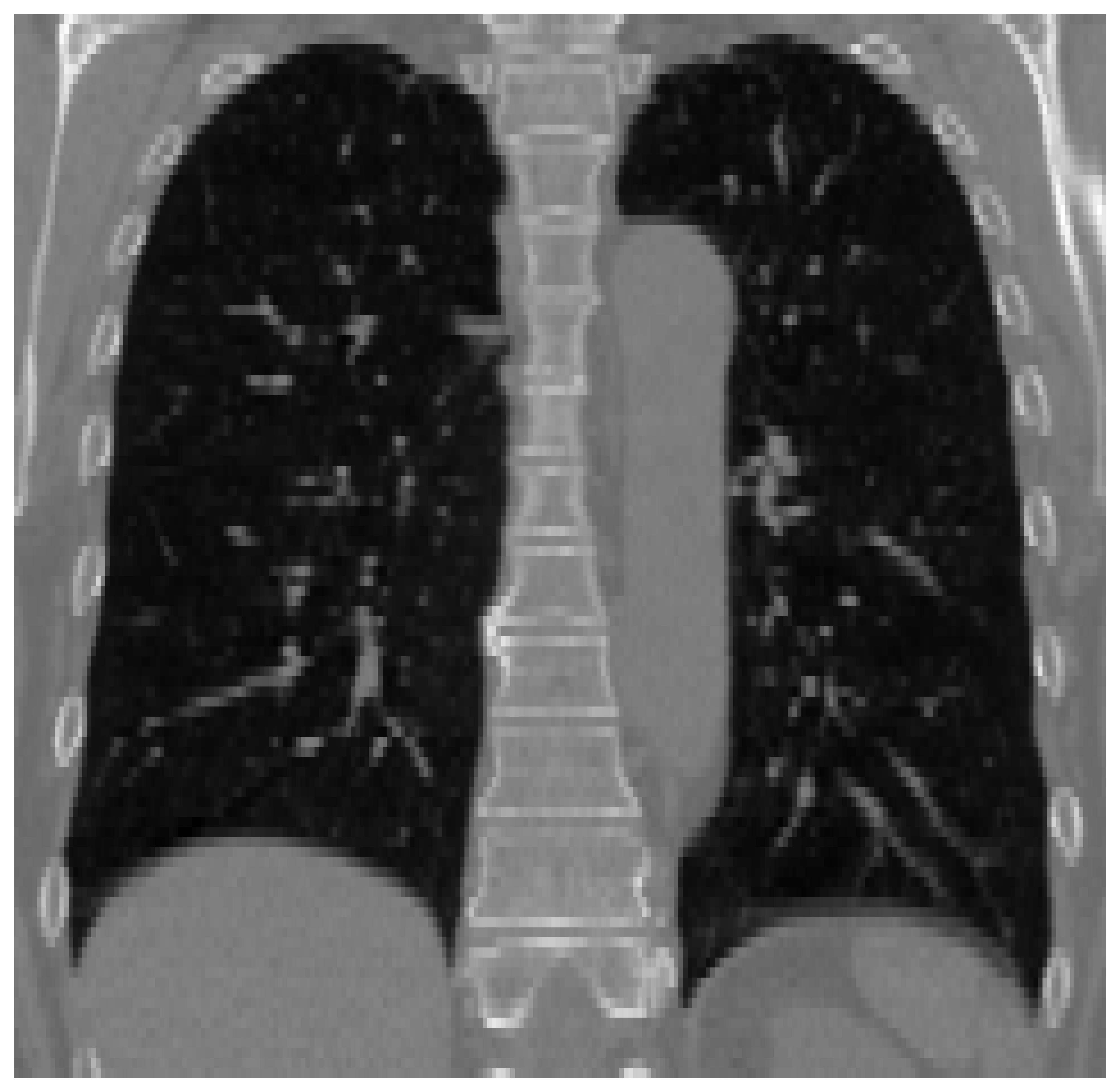}
\label{fig: lung reference}}
\subfigure[]{
\includegraphics[height = 2.6cm]{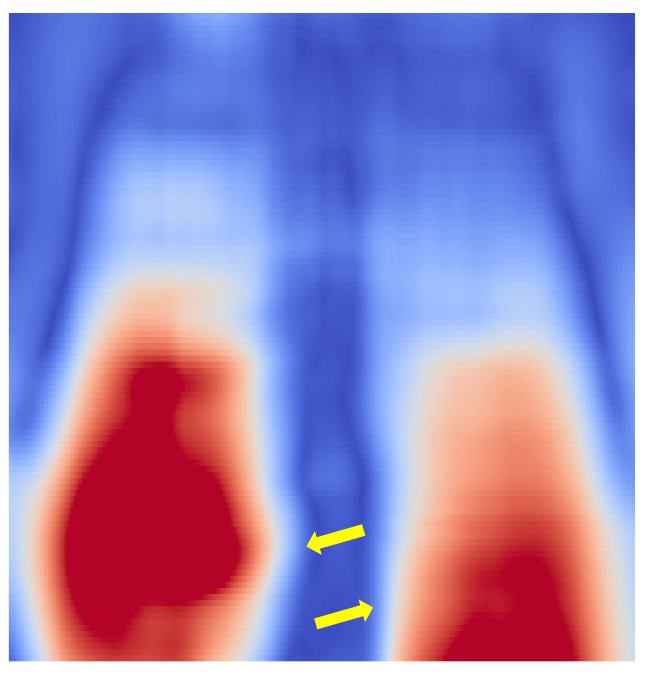}
\label{fig: lung deformed mag gaussian}}
\subfigure[]{
\includegraphics[height = 2.6cm]{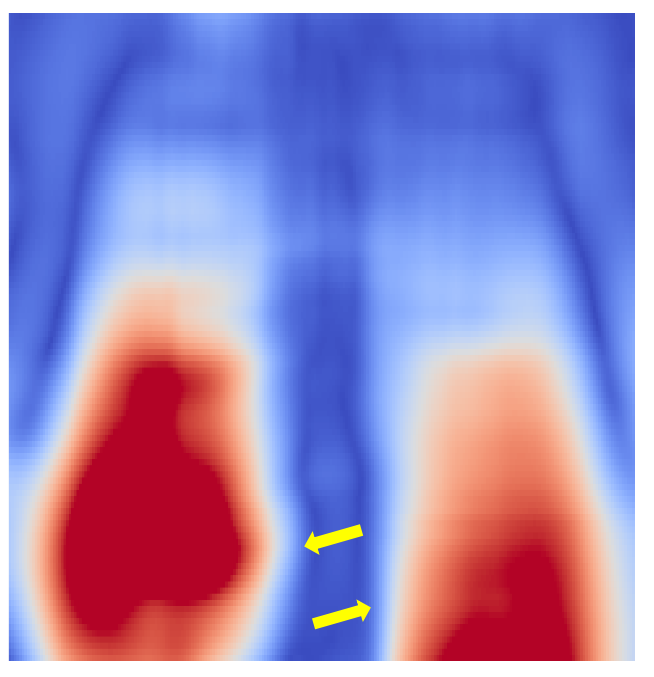}
\label{fig: lung deformed mag zeroth-order}}
\subfigure[]{
\includegraphics[height = 2.6cm]{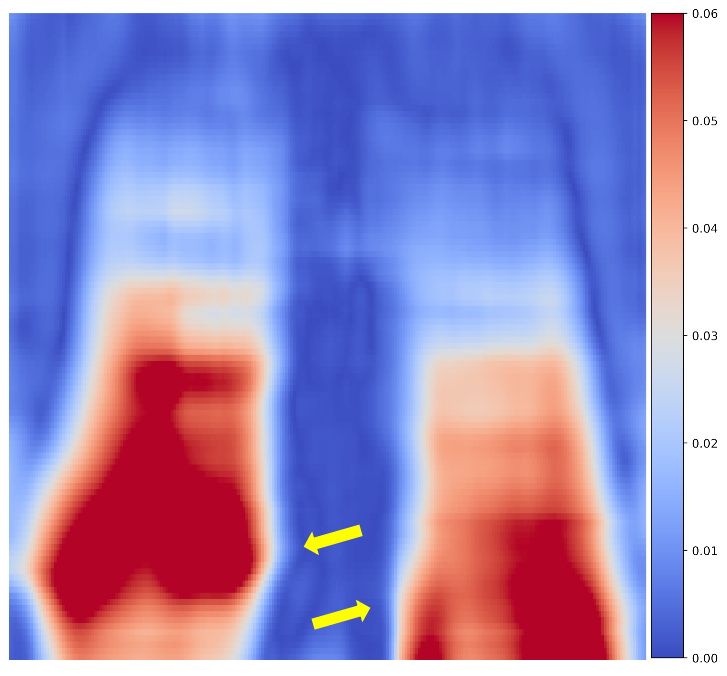}
\label{fig: lung deformed mag zeroth and first order}}
\caption{\label{fig: lung result}Registration results on lung images with sliding motion. \subref{fig: lung template} Template image; \subref{fig: lung deformed gaussian}-\subref{fig: lung deformed zeroth and first order} Deformed template image overlaid with red deformation field by using Gaussian kernel, Wendland kernel with zeroth-order momentum and both zeroth- and first-order momenta, respectively; \subref{fig: lung reference} Reference image; \subref{fig: lung deformed mag gaussian}-\subref{fig: lung deformed mag zeroth and first order} Deformation field magnitudes obtained by using Gaussian kernel, Wendland kernel with zeroth-order momentum and both zeroth- and first-order momenta, respectively.}
\end{figure}

To quantify the accuracy of the registration, the Target Registration Error (TRE) is calculated for 300 anatomical landmarks between reference and template landmarks, both before and after registration. The results, shown in \cref{eq: TRE}, indicated that our proposed method, as well as the LDDMM using Gaussian and Wenland kernels, produced statistically significant improvements in terms of TRE compared to before registration. Furthermore, our method performed similarly to the tested methods in terms of TRE.


\begin{figure}[!htbp]
\centering
\subfigure[]{
\includegraphics[height = 4cm]{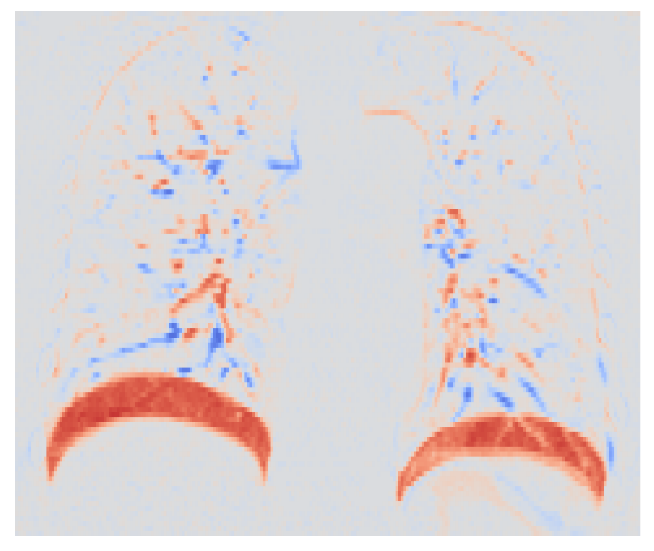}
\label{fig: lung diff before}}
\subfigure[]{
\includegraphics[height = 4cm]{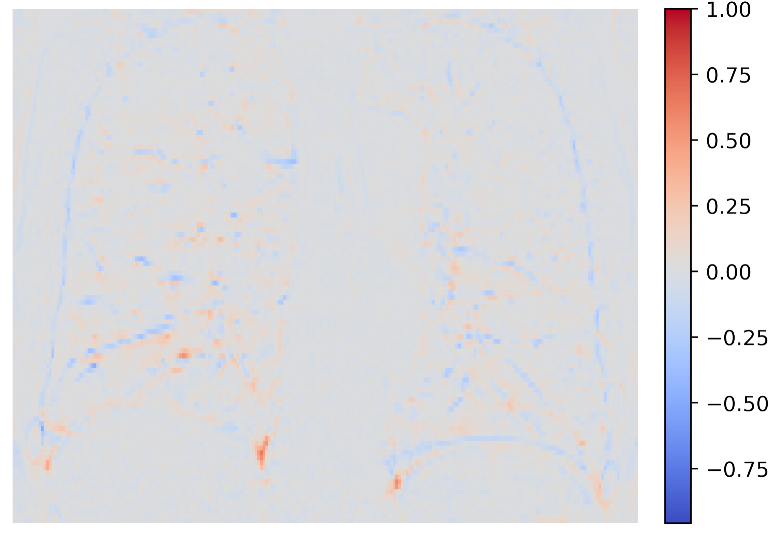}
\label{fig: lung diff after}}
\caption{\label{fig: lung diff}Registration difference images before and after registration. \subref{fig: lung diff before} Difference image before registration; \subref{fig: lung diff after} Difference image after registration using proposed method.}
\end{figure}

A visual inspection of the deformation fields, as shown in \cref{fig: lung result} for case 5, revealed that our proposed method produced a more physiologically plausible sliding motion between the thoracic cavity and the lung, particularly in the yellow region of interest (highlighted by the yellow box in \cref{fig: lung deformed zeroth and first order}). In contrast, as depicted in \cref{fig: lung deformed gaussian,fig: lung deformed zeroth-order}, the vertebra is torn down in an unnatural manner along with the lung. The yellow arrows in the deformation field magnitude images in \cref{fig: lung deformed mag gaussian,fig: lung deformed mag zeroth-order,fig: lung deformed mag zeroth and first order} indicated that our proposed method generated more discontinuous deformation fields. 
The difference image before and after registration using our proposed method, shown in \cref{fig: lung diff}, demonstrated that our method can achieve an accurate registration result.

\section{Conclusion}
\label{sec:Conclusion}
In this paper, we proposed a novel registration method that combines both zeroth- and first-order momenta based on a multiplicative Wendland kernel within the LDDMM framework. Our method was supported by a mathematical analysis of the derived flow, providing a deeper understanding of the method. The experimental results on synthetic 2D images and a clinical CT dataset showed that our method can effectively preserve discontinuous motions at sliding interfaces. These results demonstrate the potential of our proposed method for improved registration accuracy in medical imaging applications.


\bibliographystyle{siamplain}
\bibliography{references}

\end{document}